\newcommand{\Host}{\ensuremath{H}}
\newcommand{\Guest}{\ensuremath{G}}
\newcommand{\Guestclass}{\ensuremath{\mathfrak{F}}}
\newcommand{\CBipartites}{{\ensuremath{\mathfrak{C}\mathfrak{B}}}}
\newcommand{\Differences}{\ensuremath{\mathfrak{D}}}
\newcommand{\cn}[3]{\ensuremath{\operatorname{c}_{\ensuremath{#1}}^{#2}(#3)}}
\def\moverlay{\mathpalette\mov@rlay}
\def\mov@rlay#1#2{\leavevmode\vtop{%
   \baselineskip\z@skip \lineskiplimit-\maxdimen
   \ialign{\hfil$\m@th#1##$\hfil\cr#2\crcr}}}
\newcommand{\charfusion}[3][\mathord]{
    #1{\ifx#1\mathop\vphantom{#2}\fi
        \mathpalette\mov@rlay{#2\cr#3}
      }
    \ifx#1\mathop\expandafter\displaylimits\fi}
\newcommand{\cupdot}{\charfusion[\mathbin]{\cup}{\cdot}}
\newcommand{\disver}{\ensuremath{\mathop{\cupdot}}}
\DeclareMathOperator{\ldim}{\operatorname{ldim}} 
\DeclareMathOperator{\fdim}{\operatorname{fdim}} 
\newtheorem{theorem}{Theorem}
\crefname{theorem}{Theorem}{Theorems}
\newtheorem{lemma}[theorem]{Lemma}
\crefname{lemma}{Lemma}{Lemmas}
 \newtheorem{proposition}[theorem]{Proposition}
\crefname{proposition}{Proposition}{Propositions}
\newtheorem{corollary}[theorem]{Corollary}
\crefname{corollary}{Corollary}{Corollaries}
\newtheorem{problem}{Problem}
\crefname{figure}{Figure}{Figures}
\def\ovl{\overline}
\begin{document}

\title{On Covering Numbers, Young Diagrams, and the Local Dimension of Posets}
\author{
 G\'abor Dam\'asdi\\{\small ELTE Budapest} \\ {\small\tt damasdigabor@caesar.elte.hu} \and
 Stefan Felsner\footnote{Partially supported by DFG grant FE-340/13-1}\\
 {\small TU Berlin} \\ {\small\tt felsner@math.tu-berlin.de}\and
 Ant\'onio Gir\~ao \\{\small School of Mathematics} \\ {\small University of Birmingham} \\ {\small\tt giraoa@bham.ac.uk} \and
 Bal\'azs Keszegh\footnote{Supported by the Lend\"ulet program of the Hungarian Academy of Sciences (MTA), under the grant LP2017-19/2017 and by NKFIH grant K116769.} \\{\small Alfr\'ed R\'enyi Institute of Mathematics, Budapest} \\{\small MTA-ELTE Lend\"ulet Combinatorial Geometry Research Group} \\{\small\tt keszegh@renyi.hu} \and
 David Lewis \\{\small Department of Mathematical Sciences} \\ {\small University of Memphis} \\ {\small\tt davidcharleslewis@outlook.com} \and
 D\'aniel T. Nagy\footnote{Supported by NKFIH grants K132696 and FK132060.} \\{\small Alfr\'ed R\'enyi Institute of Mathematics} \\ {\small\tt nagydani@renyi.hu}  \and
 Torsten Ueckerdt\\ {\small Institute of Theoretical Informatics} \\ {\small Karlsruhe Institute of Technology}\\ {\small\tt torsten.ueckerdt@kit.edu}
}

\maketitle

\begin{abstract}
  \noindent
  We study covering numbers and local covering numbers with respect to
  difference graphs and complete bipartite graphs. In particular we
  show that in every cover of a Young diagram with $\binom{2k}{k}$
  steps with generalized rectangles there is a row or a column in the
  diagram that is used by at least $k+1$ rectangles, and prove that this is
  best-possible. This answers two questions
  by Kim, Martin, Masa{\v{r}}{\'\i}k, Shull, Smith, Uzzell, and
  Wang~\cite{KMMSSUW18}, namely:
  \begin{enumerate}
  \item What is the local complete bipartite cover number of a difference graph?
  \item Is there a sequence of graphs with constant local difference
    graph cover number and unbounded local complete bipartite cover number?
  \end{enumerate}
  We add to the study of these local covering numbers with a lower
  bound construction and some examples. Following Kim \emph{et al.}, we
  use the results on local covering numbers to provide lower and upper
  bounds for the local dimension of partially ordered sets of
  height~2. We discuss the local dimension of some posets related
  to Boolean lattices and show that the poset induced by
  the first two layers of the Boolean lattice has local dimension
  $(1 + o(1))\log_2\log_2 n$.  We conclude with some remarks on covering
  numbers for digraphs and Ferrers dimension.
\end{abstract}

\section{Introduction}

The \textit{covering number} of a graph $H$ (host) with respect to a class
\Guestclass\ is the least~$k$ such that there are graphs
$G_1,\ldots,G_k \in\Guestclass$ with $G_i \subset H$ for $i\in[k]$
such that their union covers the edges of $H$ and no other edges. We denote this number
by $\cn{}{\Guestclass}{H}$. The study of covering numbers has a long
tradition:
\begin{itemize}
\item
  In 1891 Petersen~\cite{Pet91} showed that the covering number of $2k$-regular
  graphs with respect to $2$-regular graphs is $k$.
\item
  In 1964 Nash-William~\cite{NW64} defined the \textit{arboricity} of a graph as
  the covering number respect to forests and showed that it equals the
  lower bound given by the maximum local density.
\item The \emph{track number} introduced by Gy\'arf\'as and
  West~\cite{GW95} and the \emph{thickness} introduced by Aggarwal
  \emph{et al.}~\cite{AKL+85} correspond to covering numbers with
  respect to interval graphs and planar graphs respectively.
\end{itemize}
Knauer and Ueckerdt~\cite{KU16} proposed the study of \emph{local
  covering numbers}.  This number is defined as the minimum number $k$
such that there is a cover of~$H$ with graphs from \Guestclass\ (see
above) such that every vertex of $H$ is contained in at most $k$
members of the cover.  We denote the local covering number by
$\cn{\ell}{\Guestclass}{H}$.  Fishburn and Hammer~\cite{FH96}
introduced the \emph{bipartite degree} which equals what we call the
local covering number with respect to complete bipartite graphs.
Motivated by questions regarding the local dimension of posets, Kim et
al.~\cite{KMMSSUW18} studied local covering number with respect to
difference graphs and compare this to the local covering number with
respect to complete bipartite graphs.

In this paper we continue the studies initiated in~\cite{KMMSSUW18}.
In Section~\ref{sec:covering-nums} we discuss local coverings with
difference graphs and complete bipartite graphs.  With
Theorem~\ref{thm:main-simple} we give a precise result regarding the local
covering number of a difference graph (Young diagram) with respect to
complete bipartite graphs (generalized rectangles). This answers a
question raised in~\cite{KMMSSUW18}.

Section~\ref{sec:local-dimension} relates the results to the local dimension of
posets. In this section we also discuss aspects of the local dimension
of Boolean lattices. Finally, in Section~\ref{sec:ferrers} we
discuss covering numbers of directed graphs and their relation with
order dimension and notions of Ferrers dimension.

\section{Covering numbers}
\label{sec:covering-nums}

Following the notation in~\cite{KU16}, local covering numbers are
defined as follows.  For a graph class $\Guestclass$ and a graph
$\Host$, an \emph{$\Guestclass$-covering} of $\Host$ is a
set of graphs $\Guest_1,\ldots,\Guest_t \in \Guestclass$ with $\Host =
\Guest_1 \cup \cdots \cup \Guest_t$. (In \cite{KU16} this is called an injective $\Guestclass$-covering. But as all coverings considered here are injective, we omit this specification throughout.) An 
$\Guestclass$-covering of $\Host$ is \emph{$k$-local} if every vertex
of $\Host$ is contained in at most $k$ of the graphs
$\Guest_1,\ldots,\Guest_t$, and the \emph{local $\Guestclass$-covering
number} of $\Host$, denoted by $\cn{\ell}{\Guestclass}{\Host}$, is the
smallest $k$ for which a $k$-local $\Guestclass$-cover of
$\Host$ exists.

A \emph{difference graph} is a bipartite graph in which the vertices
of one partite set can be ordered $a_1,\ldots,a_r$ in such a way that
$N(a_i) \subseteq N(a_{i-1})$ for $i=2,\ldots,r$, i.e., the
neighbourhoods of these vertices along this ordering are weakly
nesting.

Difference graphs are closely related to Young diagrams.  Let
$\mathbb{N}$ denote the set of positive integers.  For
$x \in \mathbb{N}$ we denote $[x] = \{1,\ldots,x\}$.  A \emph{Young
  diagram} with $r$ rows and $c$ columns is a subset
$Y \subseteq [r] \times [c]$ such that whenever $(i,j) \in Y$, then
$(i-1,j) \in Y$ provided $i \geq 2$, as well as $(i,j-1) \in Y$
provided $j \geq 2$.  A Young diagram\footnote{In the literature our
  Young diagrams are more frequently called Ferrers diagrams. We stick
  to Young diagram to be consistent with \cite{KMMSSUW18}.}  is
visualized as a set of axis-aligned unit squares, called \emph{cells}
that are arranged consecutively in rows and columns, each row starting
in the first column, and with every row (except the first) being at
most as long as the row above.

A \emph{generalized rectangle} (also called combinatorial rectangle) in a Young diagram
$Y \subseteq [r] \times [c]$ is a set $R$ of the form $R = S \times T$
with $S \subseteq [r]$ and $T \subseteq [c]$ and $R\subseteq Y$. Note
that (unless $Y = [r]\times [c]$) not every set of the form
$R = S \times T$ with $S \subseteq [r]$ and $T \subseteq [c]$
satisfies $R \subseteq Y$.  A generalized rectangle $R = S \times T$
with $S$ being a set of consecutive numbers in $[r]$ and $T$ being a
set of consecutive numbers in $[c]$ is an \emph{actual rectangle}.  A
generalized rectangle $R = S \times T$ \emph{uses} the rows in~$S$ and
the columns in $T$.  See \cref{fig:Young-diagram} for an
illustrative example.
\medskip

Difference graphs can be characterized as those bipartite graphs
$H = (V,E)$ with bipartition $V = A \disver B$, $|A| = r, |B| = c$,
which admit a bipartite adjacency matrix
$M = (m_{s,t})_{s \in A, t \in B}$ whose support is a Young diagram
$Y \subseteq [r] \times [c]$:
\[ \forall s \in A, t \in B\colon \qquad \{s,t\} \in E \quad
\Leftrightarrow \quad (s,t) \in Y \quad \Leftrightarrow \quad m_{s,t}
= 1
\]
Moreover, a complete bipartite subgraph $G$ of $H$ corresponds
to a generalized rectangle $R$ in $Y$.  Rows and columns of $M$
correspond to vertices of $H$ in $A$ and $B$, respectively.

\begin{figure}[ht]
 \centering
 \includegraphics[scale=.5]{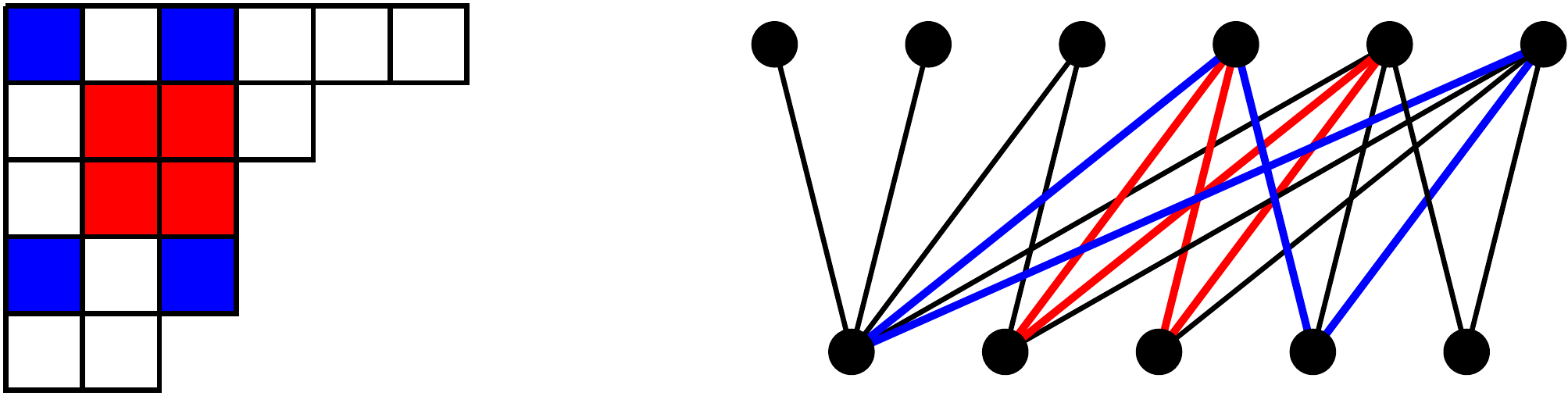}
 \caption{ \textbf{Left:} A Young diagram $Y$ with $r=5$ rows and
   $c=6$ columns.  Highlighted are the generalized rectangle
   $\{1,4\} \times \{1,3\}$ (blue), and the actual rectangle
   $\{2,3\} \times \{2,3\}$ (red).  \textbf{Right:} A corresponding
   difference graph with corresponding red and blue $K_{2,2}$.}
 \label{fig:YD+diffG}
\end{figure}

In~\cite{KMMSSUW18}, Kim \textit{et al.} introduced the concept of
covering a Young diagram with generalized rectangles subject to
minimizing the maximum number of rectangles intersecting any row or
column.  Their motivation was to investigate the relations between
\emph{local difference cover numbers} and \emph{local complete
  bipartite cover numbers}.

Let $\Differences$ denote the class of all difference graphs, and
$\CBipartites \subset \Differences$ the class of all complete
bipartite graphs.  Clearly, we have $\cn{\ell}{\Differences}{\Host}
\leq \cn{\ell}{\CBipartites}{\Host}$ for all graphs~$\Host$.  Kim
\textit{et al.}~\cite{KMMSSUW18} asked whether there is a sequence of
graphs $(\Host_i \colon i \in \mathbb{N})$ for which
$\cn{\ell}{\Differences}{\Host_i}$ is constant while
$\cn{\ell}{\CBipartites}{\Host_i}$ is unbounded.  They prove that for
all graphs $\Host$ on $n$ vertices,
\[ \cn{\ell}{\CBipartites}{\Host} \leq \cn{\ell}{\Differences}{\Host}
\cdot \left\lceil \log_2(n/2+1) \right\rceil,
\] by showing that $\cn{\ell}{\CBipartites}{\Host} \leq \lceil
\log_2(r+1) \rceil$ whenever $\Host \in \Differences$ is a difference
graph with one partite set of size $r$.  However, no lower bound on
$\cn{\ell}{\CBipartites}{\Host}$ for $\Host \in \Differences$ is
established in~\cite{KMMSSUW18}.  Specifically, Kim \textit{et al.}
ask for the exact value of $\cn{\ell}{\CBipartites}{\Host_i}$ for the
difference graph $H_i$ with vertex set $\{a_1,\ldots,a_i\} \cup
\{b_1,\ldots,b_i\}$ and $N(a_j) = \{b_1,\ldots,b_j\}$ for all $j\in [i]$.
For the case that $i+1$ is a power of~$2$ they prove the upper bound
$\cn{\ell}{\CBipartites}{\Host_i} \leq \log_2(i+1)-1$.

\medskip
The number of steps of a Young diagram $Y\subseteq [r] \times [c]$ is
the number of different row lengths in $Y$, i.e., the cardinality of
\[
  Z = \{ (s,t) \in Y \mid (s+1,t) \notin Y \text{ and } (s,t+1)
\notin Y\}.
\]
The cells in $Z$ are called the \emph{steps} of $Y$.  Young
diagrams with $n$ elements, $r$ rows, $c$ columns, and $z$ steps,
visualize partitions of $n$ into~$r$ unlabeled
summands (row lengths) with summands of $z$ different values and
largest summand being $c$.

We say that $Y$ is \emph{covered} by a set $C$ of generalized
rectangles if $Y = \bigcup_{R \in C} R$, i.e., $Y$ is the union of all
rectangles in $C$.  In this case we also say that $C$ is a
\emph{cover} of $Y$.  If additionally the rectangles in $C$ are
pairwise disjoint, we call $C$ a \emph{partition} of $Y$.  For
example, \cref{fig:Young-diagram} shows a Young diagram with a
partition into actual rectangles.

\begin{theorem}\label{thm:main-simple}
  For any $k \in \mathbb{N}$, any Young diagram $Y$ can be covered by
  a set $C$ of generalized rectangles such that each row and each
  column of $Y$ is used by at most $k$ rectangles in $C$ if and only if
  $Y$ has strictly less than $\binom{2k}{k}$ steps.
\end{theorem}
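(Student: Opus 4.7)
The plan is to prove a sharper, asymmetric generalization. Define $f(a, b)$ to be the largest $m$ such that some Young diagram with $m$ steps admits a cover in which each row is used by at most $a$ rectangles and each column by at most $b$ rectangles. By collapsing maximal blocks of consecutive rows having the same length (and analogously for columns), one checks that the supremum is attained by the standard $m$-step staircase $Y_m = \{(i, j) : i + j \leq m + 1\}$, so we may restrict to staircases throughout. The theorem is the symmetric case $a = b = k$ of the identity $f(a, b) = \binom{a+b}{a} - 1$, which by Pascal's identity is equivalent to the boundary conditions $f(0, b) = f(a, 0) = 0$ together with the recursion
\[
f(a, b) = 1 + f(a-1, b) + f(a, b-1), \qquad a, b \geq 1.
\]

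For the ``$\geq$'' direction I would induct on $a + b$ and give a recursive construction. Let $m = \binom{a+b}{a} - 1$, $u = \binom{a+b-1}{a-1}$, and $v = \binom{a+b-1}{a}$, so $u + v = m + 1$. Place a \emph{central} rectangle $[1, u] \times [1, v]$, which sits inside $Y_m$ because its NE corner $(u, v)$ lies exactly on the staircase boundary. The complement of the central rectangle in $Y_m$ splits disjointly into an NE sub-staircase (rows $[1, u-1]$, columns $[v+1, m]$) isomorphic to $Y_{u-1}$ with $u - 1 = f(a-1, b)$ steps, and an SW sub-staircase (rows $[u+1, m]$, columns $[1, v-1]$) isomorphic to $Y_{m-u}$ with $m - u = f(a, b-1)$ steps. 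Since the central rectangle uses one of the $a$ row-slots at each row in $[1, u]$ and one of the $b$ column-slots at each column in $[1, v]$, the induction hypothesis furnishes an $(a-1, b)$-local cover of the NE piece and an $(a, b-1)$-local cover of the SW piece; stitching the three ingredients together yields a valid $(a, b)$-local cover of $Y_m$.

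For the ``$\leq$'' direction I would again induct on $a + b$, aiming to establish the reverse inequality in the recursion. Given an $(a, b)$-local cover $C$ of $Y_m$, for each row $r$ set $M(r) = \min\{\max T(R) : R \in C, \, r \in S(R)\}$ and analogously $N(c) = \min\{\max S(R) : R \in C, \, c \in T(R)\}$. The plan is to locate a \emph{splitting index} $i \in [1, m]$ (with $j = m + 1 - i$) such that $M(r) \leq j$ for every $r \leq i - 1$ and $N(c) \leq i$ for every $c \leq j - 1$. Given such an $i$, each row of the NE sub-staircase contains at least one rectangle with $\max T(R) \leq j$ that is absent from the NE-restriction, so that restriction is $(a-1, b)$-local; symmetrically the SW-restriction is $(a, b-1)$-local; induction on each piece plus the central step $z_i$ then yields $m \leq 1 + f(a-1, b) + f(a, b-1)$. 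The main obstacle is proving existence of such a splitting index. As $i$ grows, the NE condition tightens while the SW condition loosens, which suggests a discrete intermediate-value argument: take $i_*$ to be the smallest $i$ at which the SW condition holds, and derive the NE condition at $i_*$ by contradiction, exploiting the inequality that every rectangle $R$ with $r \in S(R)$ contributes $\max T(R)$ as an upper bound on $M(r)$ and $\max S(R)$ as an upper bound on $N(c)$ for $c \in T(R)$. Turning this linkage between $M$ and $N$ into a rigorous argument is the crux.
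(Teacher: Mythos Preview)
Your construction for the ``$\geq$'' direction is exactly the paper's: both place the rectangle $[1,u]\times[1,v]$ with $u=\binom{a+b-1}{a-1}$, recurse on the two residual staircases, and invoke Pascal's rule. Your reduction to the staircase $Y_m$ is likewise the paper's Lemma (which they prove by row/column deletion in one direction and row/column duplication in the other).

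For the ``$\leq$'' direction your route differs from the paper's, and the gap you flag---existence of a splitting index---is real but can be closed with one short observation. Take $i_*$ minimal with the SW condition (it holds at $i=m$). If $i_*=1$ the NE condition is vacuous. Otherwise SW fails at $i_*-1$, giving some $c\le j_*$ with $N(c)\ge i_*$. If NE also failed at $i_*$ there would be some $r\le i_*-1$ with $M(r)\ge j_*+1$. The cell $(r,c)$ lies in $Y_m$ (since $r+c\le i_*-1+j_*=m$) and is covered by some $R$; but then $\max S(R)\ge N(c)\ge i_*$ and $\max T(R)\ge M(r)\ge j_*+1$, so $R$ contains a cell with coordinate sum at least $i_*+j_*+1=m+2$, contradicting $R\subseteq Y_m$. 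Hence NE holds at $i_*$, and your induction goes through.

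The paper avoids this search entirely. It \emph{fixes} the split at $a=\binom{a+b-1}{a-1}$ independently of the cover, lets $C'$ (respectively $C''$) be the rectangles meeting the NE (respectively SW) residual staircase, and observes $C'\cap C''=\emptyset$ by the same ``a rectangle cannot reach both corners'' argument. Induction on the two pruned covers then produces a row $s$ hit $\ge a$ times in $C'$ and a column $t$ hit $\ge b$ times in $C''$; the rectangle covering $(s,t)$ cannot lie in both, so it pushes one of the two counts over the limit. This is a little slicker---no intermediate-value lemma, and the two inductive calls are made at predetermined sizes---but the geometric kernel (a generalized rectangle straddling both off-diagonal corners would escape $Y_m$) is precisely the fact that makes your splitting index exist.
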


The Young diagram of the difference graph $H_i$ is
 $Y_i = \{ (s,t) \in [i] \times [i] \mid s
+ t \leq i+1 \}$, i.e., the (unique) Young diagram with $i$ rows, $i$
columns, and $i$ steps. Therefore \cref{thm:main-simple}
answers the questions raised by Kim~\textit{et al.}.

\subsection{Proof of \cref{thm:main-simple}}
\label{ssec:proof-main}

Throughout we shall simply use the term \emph{rectangle} for
generalized rectangles, and rely on the term \emph{actual rectangle}
when specifically meaning rectangles that are contiguous.  For a Young
diagram $Y$ and $i,j \in \mathbb{N}$, let us define a cover $C$ of $Y$
to be \emph{$(i,j)$-local} if each row of $Y$ is used by at most $i$
rectangles in $C$ and each column of $Y$ is used by at most $j$
rectangles in $C$.  Recall that $Y_z$ is the Young diagram with $z$
rows, $z$ columns, and $z$ steps. See \cref{fig:Young-diagram}.

\begin{figure}[ht]
 \centering
 \includegraphics{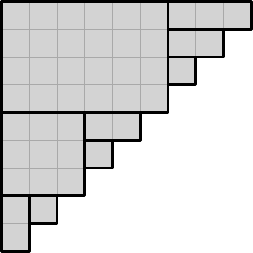}
 \caption{
The Young diagram
$Y_9$ with $9$ steps and a $(2,3)$-local partition of $Y$ with actual
rectangles.}
 \label{fig:Young-diagram}
\end{figure}

We start with a lemma stating that instead of considering any Young
diagram with $z$ steps, we may restrict our attention to just $Y_z$.

\begin{lemma}\label{lem:reduction-to-Yk}
  Let $i,j,z \in \mathbb{N}$ and $Y$ be any Young diagram with $z$
  steps.  Then~$Y$ admits an $(i,j)$-local cover if and only if $Y_z$
  admits an $(i,j)$-local cover with exactly $z$ rectangles.
\end{lemma}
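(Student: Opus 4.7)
The plan is to exploit the natural band structure of $Y$. Since $Y$ has exactly $z$ steps, its rows partition into $z$ consecutive \emph{row bands} $R_1,\ldots,R_z$ (top to bottom, with all rows in $R_a$ sharing a common length $\ell_a$ and $\ell_1>\cdots>\ell_z$); dually the columns split into $z$ \emph{column bands} $C_1,\ldots,C_z$ (left to right). One checks that $R_a \times C_b \subseteq Y$ precisely when $a+b \leq z+1$, so collapsing each band to one representative recovers $Y_z$. Denote by $\rho_a$ the bottom row of $R_a$ and by $\gamma_b$ the rightmost column of $C_b$; these sequences are strictly increasing, and $\ell_k = \gamma_{z-k+1}$.

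The easy direction is \emph{expansion}. Given a cover $\{S^*_k \times T^*_k : k \in [z]\}$ of $Y_z$, set
\[
\hat R_k = \Bigl(\bigcup_{a \in S^*_k} R_a\Bigr) \times \Bigl(\bigcup_{b \in T^*_k} C_b\Bigr).
\]
The band structure makes each $\hat R_k$ a valid generalized rectangle in $Y$, the collection covers $Y$, and every row of $R_a$ (resp.\ column of $C_b$) lies in exactly the same rectangles as row $a$ (resp.\ column $b$) of the $Y_z$-cover. Hence the $(i,j)$-locality transfers unchanged.

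For the reverse direction, start from an $(i,j)$-local cover $C$ of $Y$. To each rectangle $R = S_R \times T_R \in C$ assign a \emph{type} $a(R) \in [z]$ equal to the row-band index of $\max(S_R)$. For each $k \in [z]$ set
\[
S^*_k = \{a \in [z] : \exists R \in C \text{ with } a(R)=k \text{ and } \rho_a \in S_R\}, \quad T^*_k = \{b \in [z] : \exists R \in C \text{ with } a(R)=k \text{ and } \gamma_b \in T_R\},
\]
and define $R^*_k = S^*_k \times T^*_k$. The inclusion $R^*_k \subseteq Y_z$ is forced by the bounds $\rho_a \leq \max(S_R) \leq \rho_k$ (giving $a \leq k$) and $\gamma_b \leq \max(T_R) \leq \ell_k = \gamma_{z-k+1}$ (giving $b \leq z-k+1$). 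Coverage is immediate: for any $(a,b) \in Y_z$ the representative cell $(\rho_a,\gamma_b)$ lies in some $R \in C$, and taking $k = a(R)$ places $(a,b)$ in $R^*_k$.

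The crucial point is the locality estimate, and it is precisely here that restricting attention to a single representative row per band is essential: row $a$ of $Y_z$ lies in $R^*_k$ only if some rectangle of $C$ through the single row $\rho_a$ has type $k$, and $\rho_a$ lies in at most $i$ rectangles of $C$, so at most $i$ types arise. The symmetric argument bounds column-usage by $j$. The main obstacle I expect is verifying that no $R^*_k$ is empty, so that we really obtain \emph{exactly} $z$ rectangles; I would handle this by applying the coverage argument to the step cell $(k,z-k+1)$ of $Y_z$, whose image $(\rho_k,\ell_k)$ is itself a step of $Y$: whichever type $k'$ covers it must satisfy $k' \geq k$ from the row side and $k' \leq k$ from the column side, pinning down $k'=k$ and forcing $R^*_k \neq \emptyset$.
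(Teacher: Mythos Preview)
Your proof is correct. The expansion direction ($Y_z \Rightarrow Y$) matches the paper's argument exactly, but for the contraction direction ($Y \Rightarrow Y_z$) you take a genuinely different route. The paper proceeds in two iterative stages: first, while the cover has more than $z$ rectangles, it finds (by pigeonhole on the $z$ step-boxes $[s]\times[t]$) two rectangles fitting into a common step-box and replaces them by their merged rectangle; second, it deletes one non-step row or column at a time until $Y_z$ is reached. You instead do everything in a single pass: assign each rectangle a type $k\in[z]$ via the band of its bottommost row, project onto one representative row and column per band, and merge all rectangles of a given type. Your observation that the step cell $(\rho_k,\gamma_{z-k+1})$ can only be covered by a rectangle of type exactly $k$ is what simultaneously guarantees non-emptiness of every $R^*_k$ and the exact count of $z$; the paper obtains the count separately via the pigeonhole merging. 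Your approach is more structural and makes the correspondence between $Y$ and $Y_z$ explicit, while the paper's is slightly more elementary but less direct.
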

\begin{proof}
First assume that $Y$ admits an $(i,j)$-local cover $C$.
If $C$ consists of strictly more than $z$ rectangles, as every rectangle is contained in a $[s]
\times [t]$ for some step $(s,t) \in Z$, by the pigeonhole principle there are
$R_1,R_2 \in C$, $R_1 \neq R_2$, such that $R_1,R_2 \subseteq [s]
\times [t]$ for some step $(s,t) \in Z$. However, in this case $C -
\{R_1,R_2\} + \{R_1 \cup R_2\}$ is also an $(i,j)$-local cover of $Y$
with one rectangle less, where $ \{R_1 \cup R_2\}$ denotes the rectangle whose row set and column set is the union of the row set and column set of $R_1$ and $R_2$. Thus, by repeating this argument, we may
assume that $|C| = z$.

 \begin{figure}[t] \centering \includegraphics{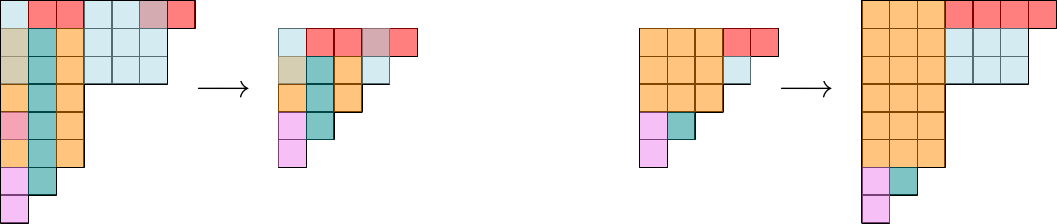}
  \caption{ Transforming a cover of any Young diagram $Y$ with $5$
steps into a cover of $Y_5$ (left) and vice versa (right).  }
  \label{fig:Yk-lemma}
 \end{figure}

If $Y \neq Y_z$, there is a row $s$ or a column $t$ that is not used
by any step in $Z$.
Apply the mapping $\mathbb{N} \times \mathbb{N} \to
\mathbb{N} \times \mathbb{N}$ with
\begin{eqnarray*}
(x,y) \mapsto
  \begin{cases} (x,y) & \text{ if } x < s\\
              (x-1,y) & \text{ if } x \geq s
  \end{cases}
& \text{ respectively } &
(x,y) \mapsto
  \begin{cases} (x,y) & \text{ if } y < t\\
              (x,y-1) & \text{ if } y \geq t
  \end{cases}
\end{eqnarray*}
Intuitively, we cut out row~$s$ (respectively column $t$), moving all
rows below one step up (respectively all columns to the right one step
left).  This gives an $(i,j)$-local cover of a smaller Young diagram
with $z$ steps, and eventually leads to an $(i,j)$-local cover of
$Y_z$, as desired.  See the left of \cref{fig:Yk-lemma}.  \medskip

On the other hand, if $Y_z$ admits an $(i,j)$-local cover
$C = \{R_1,\ldots,R_z\}$, this defines an $(i,j)$-local cover of $Y$
as follows.  Index the rows used by the steps $Z$ of $Y$ by
$s_1 < \cdots < s_z$ and the columns used by the steps $Z$ of $Y$ by
$t_1 < \cdots < t_z$ and let $s_0 = t_0 = 0$.  Defining
 \[ R'_a = \{(s,t) \in Y \mid s_{x-1} < s \leq s_x \text{ and }
t_{y-1} < t \leq t_y \text{ for some } (x,y) \in R_a\}
 \] for $a = 1,\ldots,z$ gives an $(i,j)$-local cover
$\{R'_1,\ldots,R'_z\}$ of $Y$.  See the right of \cref{fig:Yk-lemma}.

Observe that the construction maps an actual rectangle $R_a$ of $Y_z$
to an actual rectangle $R'_a$ of $Y$.  Also, if $\{R_1,\ldots,R_z\}$
is a partition of $Y_z$, then $\{R'_1,\ldots,R'_z\}$ is a partition of
$Y$.  This will be used in the proof of \cref{enum:construction} of
\cref{thm:main-general}.
\end{proof}

Let us now turn to our main result.
In fact, we shall prove the following strengthening of \cref{thm:main-simple}.

\begin{theorem}\label{thm:main-general}
For any $i,j,z \in \mathbb{N}$
and any Young diagram $Y$ with $z$ steps, the following hold.
 \begin{enumerate}[label = (\roman*)]
  \item If $z < \binom{i+j}{i}$, then there exists an $(i,j)$-local
    partition of $Y$ with actual rectangles.
    \label{enum:construction}
  \item If $z \geq \binom{i+j}{i}$, then there exists no $(i,j)$-local
    cover of $Y$ with generalized rectangles.
    \label{enum:argument}
 \end{enumerate}
\end{theorem}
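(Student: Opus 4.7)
Both parts are proved by induction on $i+j$; the base cases $i=0$ or $j=0$ are immediate since $\binom{i+j}{i}=1$ forces $z=0$, and by Lemma~\ref{lem:reduction-to-Yk} it suffices in each part to treat the triangular diagram $Y_z$. The inductive engine is Pascal's identity $\binom{i+j}{i}=\binom{i+j-1}{i-1}+\binom{i+j-1}{i}$; in both directions I decompose $Y_z$ into one distinguished rectangle together with two smaller triangular subdiagrams on complementary row- and column-ranges.

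For part~(i), set $a=\binom{i+j-1}{i-1}$. The assumption $z<\binom{i+j}{i}$ gives both $a-1<\binom{i+j-1}{i-1}$ and $z-a<\binom{i+j-1}{i}$. I partition $Y_z$ into the actual rectangle $R=[1,a]\times[1,z+1-a]$ (which lies in $Y_z$ since $a+(z+1-a)=z+1$), an isomorphic copy of $Y_{a-1}$ on rows $[1,a-1]$ and columns $[z+2-a,z]$, and an isomorphic copy of $Y_{z-a}$ on rows $[a+1,z]$ and columns $[1,z-a]$. Applying the inductive hypothesis with parameters $(i-1,j)$ and $(i,j-1)$ respectively produces actual-rectangle partitions of the two subdiagrams. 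Glueing to $R$ yields a partition whose locality is controlled row by row: each $s\in[1,a-1]$ is used once by $R$ plus at most $i-1$ times by the $Y_{a-1}$-partition (total $\le i$), row $a$ is used only by $R$, and each $s\in[a+1,z]$ is used at most $i$ times by the $Y_{z-a}$-partition; the column count is symmetric.

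For part~(ii), I mirror this decomposition inside an arbitrary cover. A first normalization: every rectangle $R$ in the cover has its \emph{corner} $(\max S_R,\max T_R)$ inside $Y_z$; any rectangle covering the step $\sigma_k=(k,z+1-k)$ must have corner exactly at $\sigma_k$; and two rectangles sharing a corner can be merged by taking the union of their row- and column-sets, which remains inside $Y_z$ because the common corner controls both maxima and whose merger cannot increase the locality. After exhaustive merging the cover becomes $\{R_1,\ldots,R_z\}$ with $R_k$ cornered at $\sigma_k$, so $S_k\subseteq[1,k]$ contains $k$ and $T_k\subseteq[1,z+1-k]$ contains $z+1-k$. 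The aim is to identify a pivot rectangle $R^*$ in this normalized cover — the natural candidate being one containing the cell $(1,1)$ — such that deleting $R^*$ decomposes the remainder into an $(i-1,j)$-local cover of a $Y_{a-1}$-shaped upper-right region and an $(i,j-1)$-local cover of a $Y_{z-a}$-shaped lower-left region, for some $a$; the inductive hypothesis then yields $a-1\le\binom{i+j-1}{i-1}-1$ and $z-a\le\binom{i+j-1}{i}-1$, summing to $z\le\binom{i+j}{i}-1$ and contradicting the assumption $z\ge\binom{i+j}{i}$.

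The main obstacle is the existence and control of the pivot $R^*$. Since the cover is arbitrary, it need not split cleanly along any single rectangle, and matching the locality budgets on the two complementary subregions requires careful accounting: the rows in $S^*$ and columns in $T^*$ have to absorb the pivot's one row-slot and one column-slot so that the two subcovers genuinely have locality reduced by one in one of the parameters. This may require a further normalization step that iteratively enlarges the pivot or reshuffles other rectangles across the split while preserving locality. As a backup, I would try a direct injective labeling of the $z$ steps by lattice paths from $(0,0)$ to $(i,j)$ avoiding one canonical path — encoding for each step $\sigma_k$ the positions of $R_k$'s row- and column-sets within the local usage budgets at the rows of $S_k$ and columns of $T_k$ — which would give the bound $z\le\binom{i+j}{i}-1$ without the explicit recursion.
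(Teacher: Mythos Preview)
Your argument for part~(i) is essentially the paper's: same reduction to $Y_z$, same rectangle $R=[1,a]\times[1,z{+}1{-}a]$ with $a=\binom{i+j-1}{i-1}$, same recursive split into $Y_{a-1}$ and $Y_{z-a}$. One small gap: your decomposition silently assumes $z\ge a$ (otherwise $z{+}1{-}a\le 0$ and $R$ is undefined). The paper sidesteps this by first constructing the partition for the extremal value $z=\binom{i+j}{i}-1$ and then restricting to the last $z'$ rows for any smaller $z'$; you should do the same or note that $z<a$ already falls under the $(i{-}1,j)$ inductive hypothesis.

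Part~(ii), however, has a genuine gap, and you have correctly located it yourself: the pivot rectangle $R^*$ need not exist with the properties you want. After your normalization to $z$ corner-indexed rectangles, deleting any single $R_m$ does \emph{not} in general leave a cover that splits into an upper-right piece and a lower-left piece with the locality reduced by one on each side---the remaining $R_k$ can straddle any proposed split (e.g.\ $R_1$, cornered at $(1,z)$, may have $T_1=[1,z]$ and cover the whole first row). Your ``iteratively enlarge the pivot or reshuffle'' step is not an argument, and the backup lattice-path encoding is only a hope, not a construction.

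The paper's proof of~(ii) avoids all of this by \emph{not} looking for a pivot inside the cover. Instead it fixes a purely geometric block $M=[a]\times[z{-}a]$ with $a=\binom{i+j-1}{i-1}$, so that $Y_z\setminus M$ is the disjoint union of a shifted $Y_a$ (call it $Y'$, in columns $>z{-}a$) and a shifted $Y_{z-a}$ (call it $Y''$, in rows $>a$). For an arbitrary cover $C$ set $C'=\{R\in C: R\cap Y'\ne\emptyset\}$ and $C''=\{R\in C: R\cap Y''\ne\emptyset\}$. The key structural fact is that $C'\cap C''=\emptyset$: any rectangle $R\subseteq Y_z$ meeting a column $>z{-}a$ is forced to have all its rows in $[1,a]$, hence cannot meet $Y''$. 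Restricting $C'$ to $Y'$ gives a cover of a copy of $Y_a$, so by induction it is not $(i{-}1,j)$-local; if a column is overused we are done, otherwise some row $s\le a$ is used by at least $i$ rectangles of $C'$. Symmetrically some column $t\le z{-}a$ is used by at least $j$ rectangles of $C''$. Now the cell $(s,t)\in M$ is covered by some $R^{\circ}\in C$; since $R^{\circ}$ lies in at most one of $C',C''$, it contributes a fresh use of row $s$ or of column~$t$, giving $i{+}1$ or $j{+}1$ there. No normalization, no pivot search, no reshuffling---the fixed split $M$ and the disjointness of $C',C''$ do all the work.
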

\begin{proof}
 First, let us prove \cref{enum:construction}.
 For shorthand notation, we define $f(i,j) := \binom{i+j}{i} - 1$.
 It will be crucial for us that the numbers $\{f(i,j)\}_{i,j \geq 1}$ solve the recursion
\begin{align}
 f(i,j) = \begin{cases}
           f(i-1,j)+f(i,j-1)+1 & \text{ if } i,j \geq 2\\
           j & \text{ if } i=1, j \geq 1\\
           i & \text{ if } i \geq 1, j=1.
          \end{cases}\label{eq:recursion}
\end{align}
This follows directly from Pascal's rule
$\binom{a}{b} = \binom{a-1}{b-1} + \binom{a-1}{b}$ for any
$a,b \in \mathbb{N}$ with $1 \leq b \leq a-1$.

Due to \cref{lem:reduction-to-Yk} it suffices to show that for any
$i,j \in \mathbb{N}$ and $z = f(i,j) = \binom{i+j}{i}-1$, there is an
$(i,j)$-local partition of $Y_z$ with actual rectangles.

We define the $(i,j)$-local partition $C$ by induction on $i$ and $j$.
For illustrations refer to~\cref{fig:construction}.

\begin{figure}
 \centering
 \includegraphics{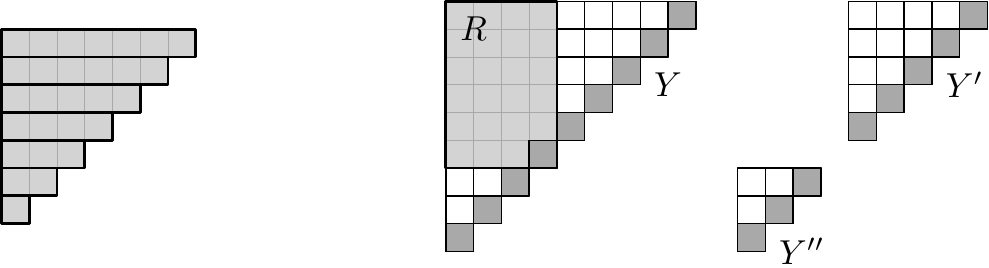}
 \caption{ \textbf{Left:} The Young diagram $Y_z$ with
   $z = f(1,7) = \binom{1+7}{1} - 1 = 7$ steps and a $(1,7)$-local
   partition of $Y_z$ into actual rectangles.  \textbf{Right:} The
   Young diagram $Y_z$ with $z = f(3,2) = \binom{3+2}{3} - 1 = 9$
   steps, the rectangle $R = [a] \times [z+1-a] = [6] \times [4]$ with
   $a = f(2,2)+1 = 6$, and the Young diagrams $Y'$ and $Y''$ with
   $f(2,2) = 5$ and $f(3,1) = 3$ steps, respectively.  }
 \label{fig:construction}
\end{figure}

If $i=1$, respectively $j=1$, then $C$ is the set of rows of $Y_j$,
respectively the set of columns of $Y_i$.  If $i \geq 2$ and
$j \geq 2$, then $z = f(i,j) = f(i-1,j) + f(i,j-1) + 1$
by~\eqref{eq:recursion}.  Consider the actual rectangle
$R = [a] \times [z+1-a]$ for $a = f(i-1,j) + 1$.  Then $Y_z - R$
splits into a right-shifted copy $Y'$ of $Y_{a-1}$ and a down-shifted
copy~$Y''$ of $Y_{z-a}$.  Note that $a-1 = f(i-1,j)$ and
$z-a = f(i,j-1)$.

By induction we have an $(i-1,j)$-local cover $C'$ of $Y'$
and an $(i,j-1)$-local cover $C''$ of $Y''$, each consisting
of pairwise disjoint actual rectangles.
Define
  \[
   C = \{R\} \cup C' \cup C'',
 \]
this is a cover of $Y_z$ consisting of pairwise disjoint actual
rectangles.  Rows $1$ to~$a$ are used by $R$ and at most $i-1$
rectangles in $C'$, and rows $a+1$ to $z$ are used by at most~$i$
rectangles in $C''$.  Hence each row of $Y_z$ is used by at most $i$
rectangles in $C$.  Similarly each column of $Y_z$ is used by at most
$j$ rectangles in~$C$.  Thus $C$ is an $(i,j)$-local partition of
$Y_z$ by actual rectangles, as desired.

For $z' < z = f(i,j)$ we obtain an $(i,j)$-local partition of $Y_{z'}$
by restricting the rectangles of the cover $C$ of $Y_z$ to the rows
from $z-z'$ to $z$. This yields an $(i,j)$-local partition of a
down-shifted copy $Y'$ of $Y_{z'}$.

\bigskip

Now, let us prove \cref{enum:argument}. Due to
\cref{lem:reduction-to-Yk} it is sufficient to show that for
$i,j \in \mathbb{N}$ the Young diagram $Y_{z'}$ with
$z' \geq \binom{i+j}{i}$ admits no $(i,j)$-local cover. If $Y_{z'}$
with $z' > z = \binom{i+j}{i}$ has an $(i,j)$-local cover, then by
restricting the rectangles of the cover to the rows from $z'-z$ to
$z'$ we obtain an $(i,j)$-local cover of a down-shifted copy of
$Y_z$. Therefore, we only have to consider $Y_z$.

Let $C$ be a cover of $Y_z$.  We shall prove that $C$ is not
$(i,j)$-local.  Again, we proceed by induction on $i$ and $j$, where
illustrations are given in~\cref{fig:argument}.

\begin{figure}
 \centering
 \includegraphics{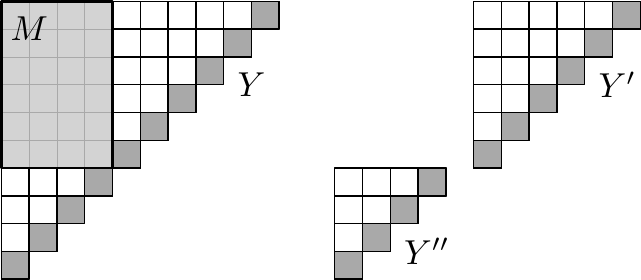}
 \caption{The Young diagram $Y_z$ with $z = \binom{3+2}{3} = 10$
   steps, the rectangle $M = [a] \times [z-a] = [6] \times [4]$ with
   $a = \binom{2+2}{2} = 6$, and the Young diagrams $Y'$ and $Y''$
   with $\binom{2+2}{2} = 6$ and $\binom{3+1}{3} = 4$ steps,
   respectively.}
 \label{fig:argument}
\end{figure}

If $i=1$, then each row is only used by a single rectangle in $C$,
otherwise,~$C$ would not be $(1,j)$-local. Hence, each row of $Y_z$ is a
rectangle in $C$. Thus column~$1$ of $Y_z$ is used by $z = j+1$
rectangles, proving that $C$ is not $(i,j)$-local.

The case $j=1$ is symmetric to the previous by exchanging rows and columns.

Now let $i \geq 2$ and $j \geq 2$.  We have
$z = \binom{i+j}{i} = \binom{(i-1)+j}{i-1} + \binom{i+(j-1)}{i}$.
Consider the rectangle $M = [a] \times [z-a]$ for
$a = \binom{(i-1)+j}{i-1}$.  Then $Y_z - M$
splits into a right-shifted $Y'$ copy of $Y_a$ and a
down-shifted copy $Y''$ of $Y_{z-a}$.
Note that $z-a = \binom{i+(j-1)}{i}$.

Let $C'$, respectively $C''$, be the subset of rectangles in $C$
using at least one of the rows $1,\ldots,a$ in $Y'$, respectively
at least one of the columns $1,\ldots,z-a$ in $Y''$. Note that
$C'\cap C'' = \emptyset$ as each generalized rectangle is contained
in~$Y_z$.

Prune each rectangle in $C'$ to the columns $z-a+1,\ldots,z$ and each
rectangle in~$C''$ to the rows $a+1,\ldots,z$. This yields
covers of $Y'$ and $Y''$.

The Young diagram $Y'$ is a copy of $Y_a$ and
$a=\binom{(i-1)+j}{i-1}$.  Hence, by induction the pruned cover $C'$
is not $(i-1,j)$-local.  If some column $t$ of $Y'$ is used by at
least $j+1$ rectangles in $C'$, this column of $Y_z$ is used by at
least $j+1$ rectangles in~$C$, proving that $C$ is not $(i,j)$-local,
as desired.  So we may assume that some row~$s$ of $Y'$ is used by at
least $i$ rectangles in $C'$.

Symmetrically, $Y''$ is a copy of $Y_{z-a}$ and $z-a = \binom{i+(j-1)}{i}$.
Hence, the pruned $C''$ is a cover of $Y''$, which by induction is not
$(i,j-1)$-local, and we may assume that some column $t$ of $Y''$ is
used by at least $j$ rectangles in $C''$.  Hence row $s$ in $Y_z$ is
used by at least $i$ rectangles in $C'$ and column $t$ in $Y_z$ is
used by at least $j$ rectangles in $C''$.  As
$C'\cap C'' = \emptyset$ and element $(s,t)$ is contained in
some rectangle of $C$, either row $s$ of $Y_z$ is used by at least
$i+1$ rectangles or column $t$ of $Y_z$ is used by at least $j+1$
rectangles (or both), proving that $C$ is not $(i,j)$-local.
\end{proof}

Finally, \cref{thm:main-simple} follows from \cref{thm:main-general}
by setting $i=j=k$.

\subsection{More about local covering numbers}
\label{ssec:more-covering}

Using \cref{thm:main-simple} and $\binom{2k}{k} = (1 +
o(1))\frac{1}{\sqrt{k\pi}}2^{2k}$, we see that
\begin{itemize}
 \item for every difference graph $\Host$ the exact value of
$\cn{\ell}{\CBipartites}{\Host}$ is the smallest $k \in \mathbb{N}$
such that for the number $z$ of steps\footnote{For graphs,
this is the number of different sizes of neighbourhoods in one partite
set.} of $\Host$ it holds that $z < \binom{2k}{k}$,

 \item the difference graphs $\Host_i$, $i \in \mathbb{N}$ (corresponding to the Young diagrams $Y_i$, $i \in \mathbb{N}$), defined by
Kim \textit{et al.} satisfy
  \[ \cn{\ell}{\CBipartites}{\Host_i} = \tfrac{1}{2}\log_2 i +
\tfrac{1}{4}\log_2\log_2 i + O(1),
  \] (and, using more precise bounds on Stirling's approximation, it
can be shown that the $O(1)$ term is at most $2$ for all $i\geq 2$),

 \item for this sequence $(\Host_i \colon i \in \mathbb{N})$ of
difference graphs $\cn{\ell}{\Differences}{\Host_i}$ is constant $1$,
while $\cn{\ell}{\CBipartites}{\Host_i}$ is unbounded, and

 \item for all graphs $\Host$ on $n$ vertices,
  \[ \cn{\ell}{\CBipartites}{\Host} \leq
\cn{\ell}{\Differences}{\Host} \cdot \left(\tfrac{1}{2}\log_2
\tfrac{n}{2}+\tfrac{1}{4}\log_2\log_2 \tfrac{n}{2} +2\right).
  \]
\end{itemize}

It is also interesting to understand the worst case scenario in covering
a bipartite graph by complete bipartite graphs or difference
graphs. With a different proof, the following result was already shown
in \cite{KMMSSUW18}.

\begin{theorem} For any $n$ there exists a bipartite graph $H$ on $n$
vertices such that
$\cn{\ell}{\Differences}{\Host}=\Omega(n/\log n)$.
\end{theorem}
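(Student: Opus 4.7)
The plan is a double-counting argument: show that the number of bipartite graphs on $n$ vertices admitting a $k$-local difference cover is too small to exhaust all bipartite graphs unless $k = \Omega(n/\log n)$.

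First I would bound the number $D(m)$ of labeled difference graphs on a vertex set of size $m$. Such a graph is determined by a bipartition of the $m$ vertices (at most $2^m$ choices), a linear ordering of each part consistent with the neighborhood chain (at most $(m!)^2$ choices), and the associated Young diagram shape (at most $\binom{2m}{m} \leq 4^m$ choices), so $D(m) \leq 2^{O(m \log m)}$.

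Next I would encode an arbitrary $k$-local difference cover $\Guest_1, \ldots, \Guest_t$ of a bipartite graph $\Host$ on vertex set $[n]$, with $V(\Guest_i) = V_i$. Assuming each $\Guest_i$ has at least one edge forces $t \leq kn/2$. The encoding has three parts: the integer $t$; for each vertex $v \in [n]$ the subset $\{i : v \in V_i\} \subseteq [t]$ of size at most $k$, contributing at most $2^{O(kn \log n)}$ choices in total; and for each $i \in [t]$ the structure of $\Guest_i$ as a labeled difference graph on $V_i$, contributing $\prod_i D(|V_i|) \leq 2^{O((\sum_i |V_i|) \log n)} = 2^{O(kn \log n)}$, since $\sum_i |V_i| \leq kn$. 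Thus at most $2^{O(kn \log n)}$ bipartite graphs on $[n]$ admit $\cn{\ell}{\Differences}{\Host} \leq k$.

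Finally, there are at least $2^{\Omega(n^2)}$ bipartite graphs on $[n]$ (fix a balanced bipartition and vary the edge set freely). Comparing $2^{O(kn \log n)} \geq 2^{\Omega(n^2)}$ forces $k = \Omega(n / \log n)$, giving some bipartite $\Host$ with $\cn{\ell}{\Differences}{\Host} = \Omega(n / \log n)$. The crux of the argument --- and the reason the bound has the form $n/\log n$ rather than a constant --- is the estimate $D(m) \leq 2^{O(m \log m)}$: a difference graph on $m$ vertices carries only $O(m \log m)$ bits of structural information (essentially a Young diagram plus two orderings) rather than the generic $O(m^2)$ bits of an arbitrary bipartite graph.
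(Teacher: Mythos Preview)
Your counting argument is correct and gives the same $\Omega(n/\log n)$ bound, but it proceeds by a genuinely different route from the paper.

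The paper's proof is structural: it takes a random bipartite graph $G$ that (with positive probability) has $\Omega(n^2)$ edges and contains no $K_{t,t}$ for $t \approx 2\log_2 n$. The key combinatorial step is that any difference graph avoiding $K_{t,t}$ decomposes into at most $t-1$ stars centred in each side; an edge-counting/pigeonhole argument then locates a vertex lying in $\Omega(n/\log n)$ members of any difference-graph cover. Your proof, by contrast, is purely information-theoretic: you bound the number of labeled difference graphs on $m$ vertices by $2^{O(m\log m)}$, encode a $k$-local cover in $2^{O(kn\log n)}$ bits, and compare against the $2^{\Omega(n^2)}$ bipartite graphs on $[n]$.

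Each approach has its merits. The paper's argument isolates an explicit obstruction---being dense and $K_{t,t}$-free---so in principle one could plug in explicit extremal graphs, and the star-decomposition lemma for $K_{t,t}$-free difference graphs is of independent interest. Your argument is shorter and more robust: it uses nothing about difference graphs beyond the crude count $D(m)\le 2^{O(m\log m)}$, and hence the same template immediately yields $\cn{\ell}{\Guestclass}{H}=\Omega(n/\log n)$ for \emph{any} class $\Guestclass$ with only $2^{O(m\log m)}$ labeled members on $m$ vertices. A minor cosmetic point: your bounds $(m!)^2$ for the orderings and $\binom{2m}{m}$ for the Young-diagram shape are generous (the tight counts are $r!\,c!\le m!$ and $\binom{m}{r}\le 2^m$), but this has no effect on the $2^{O(m\log m)}$ conclusion.
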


\begin{proof}

\iftrue
Suppose $n\geq 2$ is even. Consider a random bipartite graph $G$ with vertex classes $A$ and $B$, where $|A| = |B| = n/2$ and each edge is chosen with probability $1/2$. For any $t \geq 2$, the expected number of $K_{t,t}$'s in $G$ is $2^{-t^2}\binom{n/2}{t}^2 < 2^{-t^2}\left(n/2\right)^{2t}$. If we choose $t = \lceil 2\log_2 n \rceil$, then the expected number of $K_{t,t}$'s (and hence the probability that $G$ contains a $K_{t,t}$) is less than $1/2$. The probability that $e(G) \geq \frac{1}{8}n^2$ is at least $1/2$, so with nonzero probability $e(G) \geq \frac{1}{8}n^2$ and $G$ has no $K_{t,t}$.

  Now consider a cover of $G$ with difference graphs.  We call a star
an $A$-star (resp. $B$-star) if its centre is in $A$
(resp. $B$). No difference graph in the cover contains a $K_{t,t}$ and thus every difference graph in the cover can be decomposed
into at most $t-1$ $A$-stars and at most $t-1$ $B$-stars.
Without loss of generality, at least half the edges of $G$ are covered by
$A$-stars. As $B$ has $n/2$ vertices, among the
$\frac{1}{16}n^2$ edges covered by $A$-stars there are at
least $n/8$ incident to some vertex $v\in B$.
Each difference graph in the cover contributes at most $t-1 \leq 2\log_2 n$ of
the $A$-stars containing $v$.  Therefore at least
$\frac{n}{16\log_2 n}$ difference graphs of the
cover contain $v$.
\fi
\end{proof}

As Kim et al.~\cite{KMMSSUW18} already observed, the upper bound follows from a theorem of Erd\H{o}s and
Pyber~\cite{EP97}, which shows that a cover of corresponding size
exists even with complete bipartite graphs.

\begin{theorem}[Erd\H{o}s and Pyber] For any simple graph $H$ on $n$
vertices, $\cn{\ell}{\CBipartites}{\Host}= O(n/\log n)$.
\end{theorem}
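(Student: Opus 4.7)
The plan is to use a greedy extraction strategy along the lines of the original Erd\H{o}s--Pyber argument. The main lemma I would establish is a K\H{o}v\'ari--S\'os--Tur\'an-style density statement: any graph on $N$ vertices with at least $C N^2/\log N$ edges contains a complete bipartite subgraph $K_{s,t}$ with both $s,t \geq c\log N$, for suitable absolute constants $C,c>0$. This follows by a standard averaging argument: choosing a uniformly random $s$-subset $S$ of vertices and using convexity of $d \mapsto \binom{d}{s}$ on the degree sequence, one shows that in expectation $S$ is contained in the neighborhood of at least $c \log N$ vertices, and any such $S$ gives the desired $K_{s,t}$.

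The main loop would then run as follows: starting from $\Host$, repeatedly locate such a biclique $K_{s,t}$ in the current residual graph, add it to the cover, and delete its edges. This continues as long as the residual graph contains at least $C n^2/\log n$ edges. The crucial accounting step is that every vertex participating in an extracted $K_{s,t}$ loses at least $c\log n$ of its incident edges in that extraction. Since every vertex starts with degree less than $n$, it participates in at most $(n-1)/(c\log n) = O(n/\log n)$ of the bicliques produced in this main phase.

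The main obstacle is cleanly handling the residual graph once it has become too sparse for the density lemma to apply, in particular ensuring that vertices with still-high residual degree do not receive an unbounded number of additional covers. Here I would iterate a one-sided variant attacking a current maximum-degree vertex $v$: by grouping the neighbors of $v$ according to their common neighborhood patterns on a short random sample one extracts a biclique of the form $\{v\} \times T$ with $|T|$ of order $\log n$, so that $v$ loses many edges while each vertex in $T$ only accrues a single extra cover. A careful bookkeeping, processing vertices in decreasing order of residual degree and charging each biclique either to its ``heavy'' side (using the $c\log n$-loss argument) or to the single center vertex, is the technical heart of the proof; once it is pushed through, the total local count is $O(n/\log n)$ as claimed.
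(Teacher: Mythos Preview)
The paper does not give a proof of this theorem; it states the result with attribution and cites the original Erd\H{o}s--Pyber paper. So there is no in-paper argument to compare your sketch against.

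Your greedy scheme is in the spirit of the original proof, but the key lemma you state is false. A graph on $N$ vertices with $CN^{2}/\log N$ edges need not contain any $K_{s,t}$ with both $s,t\ge c\log N$: the random graph $G(N,2C/\log N)$ has this many edges with high probability, yet a first-moment count shows that its largest balanced complete bipartite subgraph has each side of order only $\Theta(\log N/\log\log N)$. Your own averaging argument exhibits the same failure. With $s=c\log N$ and average degree $\bar d = 2CN/\log N$, the Jensen bound on the expected number of common neighbours of a random $s$-set is
\[
N\cdot\frac{\binom{\bar d}{s}}{\binom{N}{s}}\;\approx\;N\Bigl(\frac{2C}{\log N}\Bigr)^{c\log N}=o(1),
\]
not $\Omega(\log N)$. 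What such an averaging actually yields at this density is a $K_{s,t}$ with one side large and the other only of order $\log N/\log\log N$. Feeding that into your main loop, vertices on the small side lose only $\Theta(\log N/\log\log N)$ incident edges per extraction, so the bookkeeping gives $O(n\log\log n/\log n)$ bicliques through a vertex, a $\log\log n$ factor away from the target. The original Erd\H{o}s--Pyber proof avoids this loss by a more careful extraction tied to the current maximum-degree vertex and by charging the two sides of each extracted biclique asymmetrically; that refinement is absent from your sketch, so as written it does not establish the claimed $O(n/\log n)$ bound.
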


Hansel \cite{hansel} (see also Kir\'aly, Nagy, P\'alv\"{o}lgyi and
Visontai~\cite{KNPV12} and Bollob\'{a}s and
Scott~\cite{bollobas2007separating}) proved that
$\cn{\ell}{\CBipartites}{K_n}\geq \log_2 n$, which together with the
obvious upper bound gives the following proposition:

\begin{proposition}
\label{prop:hans} For all $n$, $\cn{\ell}{\CBipartites}{K_n} =
\lceil\log_2 n\rceil$.
\end{proposition}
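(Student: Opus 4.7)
The plan is to establish matching upper and lower bounds and combine them. The lower bound is immediate from the cited result of Hansel: since $\cn{\ell}{\CBipartites}{K_n}$ is a positive integer and Hansel's inequality gives $\cn{\ell}{\CBipartites}{K_n} \geq \log_2 n$, rounding up yields $\cn{\ell}{\CBipartites}{K_n} \geq \lceil \log_2 n \rceil$ for free.

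For the upper bound, I would exhibit an explicit $\lceil \log_2 n \rceil$-local cover of $K_n$ by complete bipartite graphs. Set $m := \lceil \log_2 n \rceil$ and identify the vertex set of $K_n$ with distinct binary strings $x_1, \ldots, x_n \in \{0,1\}^m$, which is possible because $2^m \geq n$. For each coordinate $i \in [m]$, let $B_i$ be the complete bipartite graph between $\{x_j : (x_j)_i = 0\}$ and $\{x_j : (x_j)_i = 1\}$; this is indeed a subgraph of $K_n$ since its two sides are disjoint. Any two distinct vertices $x_j, x_{j'}$ differ in at least one coordinate $i$, and their edge therefore lies in $B_i$, so $B_1, \ldots, B_m$ cover every edge of $K_n$. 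Each vertex appears in exactly $m$ of these graphs (one for each coordinate), so the cover is $m$-local, giving $\cn{\ell}{\CBipartites}{K_n} \leq \lceil \log_2 n \rceil$.

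The main substance of the proposition sits entirely in Hansel's lower bound, which we take as a black box; the upper bound is the standard binary-labelling construction and presents no obstacle. Combining both inequalities yields the claimed equality.
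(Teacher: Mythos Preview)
Your proof is correct and matches the paper's approach exactly: the paper itself gives no formal proof but simply cites Hansel for the lower bound $\cn{\ell}{\CBipartites}{K_n} \geq \log_2 n$ and refers to ``the obvious upper bound,'' which is precisely the binary-labelling construction you spell out. Your write-up just makes explicit what the paper leaves implicit.
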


In fact, Hansel proved a somewhat stronger result, namely that, for
every $\CBipartites$-covering of $K_n$, the average number
of complete bipartite graphs in which a vertex appears is at least
$\log_2 n$.

An interesting case is $K^\star_{n,n}$, which is obtained by deleting
the edges of a perfect matching from the complete bipartite graph
$K_{n,n}$. Note that $K^\star_{n,n}$ is the union of two difference
graphs. What is the best covering of this graph by complete bipartite
graphs? 

\begin{proposition}
\label{prop:kstar} $\cn{\ell}{\CBipartites}{K^\star_{n,n}}\le
\cn{\ell}{\CBipartites}{K_n}\le 2
\cn{\ell}{\CBipartites}{K^\star_{n,n}}$ and therefore
$\cn{\ell}{\CBipartites}{K^\star_{n,n}} = \Theta(\log n)$.
\end{proposition}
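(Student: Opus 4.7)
The plan is to prove both inequalities by direct transfer of $\CBipartites$-covers between $K_n$ and $K^\star_{n,n}$, and then combine them with Proposition~\ref{prop:hans}. Label $V(K_n) = \{v_1, \ldots, v_n\}$ and the parts of $K^\star_{n,n}$ as $A = \{a_1, \ldots, a_n\}$, $B = \{b_1, \ldots, b_n\}$ with removed matching $\{a_ib_i : i \in [n]\}$. Under this labeling each edge $v_iv_j$ of $K_n$ corresponds to the pair of edges $a_ib_j$ and $a_jb_i$ of $K^\star_{n,n}$, and this is the correspondence I will exploit in both directions.

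For $\cn{\ell}{\CBipartites}{K^\star_{n,n}} \leq \cn{\ell}{\CBipartites}{K_n}$, start from an optimal $\CBipartites$-cover of $K_n$. I would replace each member $K_{X,Y}$ (with disjoint $X, Y \subseteq [n]$) by the two complete bipartite subgraphs of $K^\star_{n,n}$ on vertex sets $\{a_i : i \in X\} \cup \{b_j : j \in Y\}$ and $\{a_j : j \in Y\} \cup \{b_i : i \in X\}$. These are honest complete bipartite subgraphs of $K^\star_{n,n}$ because $X \cap Y = \emptyset$ prevents any matching edge from sitting inside either vertex set, and jointly they cover precisely the edges of $K^\star_{n,n}$ corresponding to $K_{X,Y}$. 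The crucial check on local loads is that $a_i$ lies in the first graph iff $i \in X$ and in the second iff $i \in Y$, hence in at most one of the two; the same holds for $b_i$. Therefore $a_i$ and $b_i$ each appear in at most as many members of the new cover as $v_i$ did in the original one.

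For $\cn{\ell}{\CBipartites}{K_n} \leq 2\cn{\ell}{\CBipartites}{K^\star_{n,n}}$ I would transfer in the opposite direction. Write each member of an optimal $\CBipartites$-cover of $K^\star_{n,n}$ as $K_{X,Y}$ with $X = \{a_i : i \in X'\}$ and $Y = \{b_j : j \in Y'\}$. Then $X' \cap Y' = \emptyset$, because any common index would force a forbidden matching edge to lie in $K_{X,Y}$, so $K_{X',Y'}$ is a legitimate complete bipartite subgraph of $K_n$. The resulting family covers $K_n$ because each edge $v_iv_j$ corresponds to two edges of $K^\star_{n,n}$, at least one of which lies in some $K_{X,Y}$ of the original cover and hence contributes to the corresponding $K_{X',Y'}$. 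Here $v_i$ appears in the image subgraph iff $a_i \in X$ or $b_i \in Y$, so its local multiplicity is bounded by the sum of the local multiplicities of $a_i$ and $b_i$, producing the factor of two.

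No step presents a substantial obstacle; the only thing to verify is that the constructed subgraphs are complete bipartite in the target graph, which reduces in both directions to disjointness of the two index sets defining them. The asymptotic conclusion $\cn{\ell}{\CBipartites}{K^\star_{n,n}} = \Theta(\log n)$ then follows immediately from the sandwiching inequalities combined with $\cn{\ell}{\CBipartites}{K_n} = \lceil \log_2 n \rceil$ from Proposition~\ref{prop:hans}.
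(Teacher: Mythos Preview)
Your proof is correct and follows essentially the same approach as the paper: both directions transfer $\CBipartites$-covers between $K_n$ and $K^\star_{n,n}$ via the index correspondence $v_i \leftrightarrow \{a_i,b_i\}$, splitting each $K_{X,Y}$ into two bipartite pieces in one direction and collapsing it back in the other. If anything, you are slightly more explicit than the paper in verifying that the index sets stay disjoint so that the constructed subgraphs live inside the target host.
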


\begin{proof}
Let us denote the vertices of $K_n$ by $\{v_1, v_2,\dots v_n\}$, and
the vertices of $K^\star_{n,n}$ by $\{a_1, a_2, \dots a_n, b_1, b_2,
\dots b_n\}$ where $a_ib_j\in E(K^\star_{n,n})\Leftrightarrow
i\not=j$.  One can easily obtain a covering of $K^\star_{n,n}$ from a
covering of $K_n$. For every complete bipartite graph in the covering
with vertex classes $\{v_{i_1}, v_{i_2},\dots v_{i_p}\}$ and
$\{v_{j_1}, v_{j_2},\dots v_{j_q}\}$ take the complete bipartite graph
with vertex classes $\{a_{i_1}, a_{i_2},\dots a_{i_p}\}$ and
$\{b_{j_1}, b_{j_2},\dots b_{j_q}\}$ and another one with vertex
classes $\{b_{i_1}, b_{i_2},\dots b_{i_p}\}$ and $\{a_{j_1},
a_{j_2},\dots a_{j_q}\}$.  This will be a covering of $K^\star_{n,n}$
where the vertices $a_i$ and $b_i$ are covered exactly as many times
as $v_i$ in the covering of $K_n$. This construction shows
$\cn{\ell}{\CBipartites}{K^\star_{n,n}}\le\cn{\ell}{\CBipartites}{K_n}$.

On the other hand, we can obtain a covering of $K_n$ from a covering
of $K^\star_{n,n}$.  For a complete bipartite graph in the covering
with vertex classes $\{a_{i_1}, a_{i_2},\dots a_{i_p}\}$ and
$\{b_{j_1}, b_{j_2},\dots b_{j_q}\}$ take the complete bipartite graph
with vertex classes $\{v_{i_1}, v_{i_2},\dots v_{i_p}\}$ and
$\{v_{j_1}, v_{j_2},\dots v_{j_q}\}$.  This will be a covering of
$K_n$ where the vertex $v_i$ is covered exactly as many times as $a_i$
and $b_i$ are covered in total in the covering of
$K^\star_{n,n}$. This construction shows
$\cn{\ell}{\CBipartites}{K_n}\le 2
\cn{\ell}{\CBipartites}{K^\star_{n,n}}$.
\end{proof}
\section{Local dimension of posets}
\label{sec:local-dimension}

The motivation for Kim \textit{et al.}~\cite{KMMSSUW18} to study local
difference cover numbers comes from the local dimension of posets, a
notion recently introduced by Ueckerdt~\cite{U16}.

For a partially ordered set (also called a poset) $\mathcal{P} = (P,\leq)$,
define a \emph{realizer} as a set $\mathcal{L}$ of linear extensions
such that if $x$ and $y$ are incomparable (denoted $x || y$), then
$x < y$ in some $L \in \mathcal{L}$ and $y < x$ in some
$L' \in \mathcal{L}$. The \emph{dimension} of $\mathcal{P}$, denoted
$\dim(\mathcal{P})$, is the minimum size of a realizer. The dimension
of a poset is a widely studied parameter.

A \emph{partial linear extension} of $\mathcal{P}$ is a linear
extension $L$ of an induced subposet of $\mathcal{P}$.  A \emph{local
  realizer} of $\mathcal{P}$ is a non-empty set $\mathcal{L}$ of
partial linear extensions such that \textbf{(1)} if $x < y$ in
$\mathcal{P}$, then $x < y$ in some $L \in \mathcal{L}$, and
\textbf{(2)} if $x$ and $y$ are incomparable (denoted $x || y$), then
$x < y$ in some $L \in \mathcal{L}$ and $y < x$ in some
$L' \in \mathcal{L}$.  The \emph{local dimension} of $\mathcal{P}$,
denoted $\ldim(\mathcal{P})$, is then the smallest $k$ for which there
exists a local realizer $\mathcal{L}$ of $\mathcal{P}$ with each
$x \in P$ appearing in at most~$k$ partial linear extensions
$L \in \mathcal{L}$. Note that by definition
$\ldim(\mathcal{P}) \leq \dim(\mathcal{P})$ for every poset
$\mathcal{P}$.

For an arbitrary height-two poset $\mathcal{P} = (P,\leq)$, Kim
\textit{et al.} consider the bipartite graph $G_{\mathcal{P}} = (P,E)$
with partite sets $A = \min(\mathcal{P})$ (the minimal elements of $\mathcal P$) and $B = P -
\min(\mathcal{P}) \subseteq \max(\mathcal{P})$ whose edges correspond
to the so-called \emph{critical pairs}:
\[ \forall x \in A, y \in B \colon \qquad \{x,y\} \in E \quad
\Leftrightarrow \quad x || y \text{ in } \mathcal{P}
\] They prove that
\[ \cn{\ell}{\Differences}{G_{\mathcal{P}}} - 2 \leq
\ldim(\mathcal{P}) \leq \cn{\ell}{\CBipartites}{G_{\mathcal{P}}} + 2,
\] which also gives good bounds for $\ldim(\mathcal{P})$ when
$\mathcal{P}$ has larger height, since we have
\[ \ldim(\mathcal{Q})-2 \leq \ldim(\mathcal{P}) \leq
2\ldim(\mathcal{Q})-1
\] for the associated height-two poset $\mathcal{Q}$ known as the
split of $\mathcal{P}$ (see~\cite{BCPSTT17}, Lemma 5.5). Using these
results and the ones from the previous section, we can conclude the
following for the local dimension of any poset.

\begin{corollary} For any poset $\mathcal{P}$ on $n$ elements with
split $\mathcal{Q}$ we have
 \[ \cn{\ell}{\Differences}{G_{\mathcal{Q}}}-4 \leq \ldim(\mathcal{P})
\leq \cn{\ell}{\Differences}{G_{\mathcal{Q}}} \cdot (1 + o(1))\log_2
n.
 \]

\end{corollary}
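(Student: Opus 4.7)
The plan is to chain together three ingredients already at our disposal: the Kim \textit{et al.}\ sandwich bound for height-two posets, the split inequality $\ldim(\mathcal{Q})-2 \le \ldim(\mathcal{P}) \le 2\ldim(\mathcal{Q})-1$ from \cite{BCPSTT17}, and the quantitative bound derived from \cref{thm:main-simple} stating that for any graph $\Host$ on $N$ vertices,
\[
  \cn{\ell}{\CBipartites}{\Host} \;\le\; \cn{\ell}{\Differences}{\Host}\cdot\Bigl(\tfrac{1}{2}\log_2\tfrac{N}{2}+\tfrac{1}{4}\log_2\log_2\tfrac{N}{2}+2\Bigr).
\]
Since $\mathcal{Q}$ is height-two and has exactly $2n$ elements, the graph $G_{\mathcal{Q}}$ is defined and has at most $2n$ vertices, which is what makes these ingredients compatible.

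For the lower bound, I would apply the Kim \textit{et al.}\ inequality to the height-two poset $\mathcal{Q}$ to get $\cn{\ell}{\Differences}{G_{\mathcal{Q}}}-2 \le \ldim(\mathcal{Q})$, and then invoke the left half of the split inequality $\ldim(\mathcal{Q}) \le \ldim(\mathcal{P})+2$. Adding these yields $\cn{\ell}{\Differences}{G_{\mathcal{Q}}}-4 \le \ldim(\mathcal{P})$, which is the claimed lower bound.

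For the upper bound, I would start from the right half of the split inequality, $\ldim(\mathcal{P}) \le 2\ldim(\mathcal{Q})-1$, and apply the Kim \textit{et al.}\ upper bound $\ldim(\mathcal{Q}) \le \cn{\ell}{\CBipartites}{G_{\mathcal{Q}}}+2$ to obtain $\ldim(\mathcal{P}) \le 2\cn{\ell}{\CBipartites}{G_{\mathcal{Q}}}+3$. Then I would plug in the bound above with $N \le 2n$, giving
\[
  \ldim(\mathcal{P}) \;\le\; 2\cn{\ell}{\Differences}{G_{\mathcal{Q}}}\cdot\Bigl(\tfrac{1}{2}\log_2 n + O(\log_2\log_2 n)\Bigr) + 3.
\]
The factor of $2$ from the split is exactly what absorbs the $\tfrac{1}{2}$, leaving $(1+o(1))\log_2 n$ as the coefficient of $\cn{\ell}{\Differences}{G_{\mathcal{Q}}}$, with the additive $3$ and the $\log\log$ correction swallowed into the $o(1)$ term (using $\cn{\ell}{\Differences}{G_{\mathcal{Q}}}\ge 1$ whenever $\mathcal{P}$ has any incomparable pair; trivial cases are handled separately).

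The only real subtlety is bookkeeping: one has to verify that $\log_2(2n) = (1+o(1))\log_2 n$ so that switching from $|G_{\mathcal{Q}}|$ to $n$ costs nothing asymptotically, and that the additive constants from the sandwich bound and from the split inequality can legitimately be absorbed into the $(1+o(1))$ factor on the right-hand side. There is no genuine obstacle beyond this careful composition.
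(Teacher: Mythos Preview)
Your proof plan is correct and is precisely the intended derivation: the paper does not spell out a proof but simply states that the corollary follows by combining the Kim \textit{et al.}\ sandwich bound for height-two posets, the split inequality $\ldim(\mathcal{Q})-2 \le \ldim(\mathcal{P}) \le 2\ldim(\mathcal{Q})-1$, and the bound $\cn{\ell}{\CBipartites}{\Host} \le \cn{\ell}{\Differences}{\Host}\cdot\bigl(\tfrac{1}{2}\log_2\tfrac{N}{2}+\tfrac{1}{4}\log_2\log_2\tfrac{N}{2}+2\bigr)$ obtained from \cref{thm:main-simple}. Your chaining of these three ingredients, including the observation that the factor~$2$ from the split inequality cancels the $\tfrac{1}{2}$ in front of $\log_2$, matches the paper's approach exactly.
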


\subsection{Local dimension of the Boolean lattice}
\label{ssec:boolean}

Let $2^{[n]}$ denote the Boolean lattice of subsets of the $n$ element set
$[n]$ (note that this lattice has $2^n$ elements, one for each subset
of $[n]$). Since the dimension of $2^{[n]}$ is $n$ we immediately have
$\ldim(2^{[n]})\le n$.

For any integer $s\in \{0,\dots,n\}$ let ${[n]}\choose{s}$ denote the
family of all the subsets of $[n]$ of size $s$.  This we call
\emph{layer} $s$ or the $s$'th layer of $2^{[n]}$ and let $P(s,t;n)$
be the subposet of $2^{[n]}$ induced by layers $s$ and $t$. We denote
$\ldim(s,t;n):=\ldim(P(s,t;n))$.

The study of the dimension $\dim(s,t;n):=\dim(P(s,t;n))$ has a long
history. Kierstead~\cite{Ki99} is a valuable survey on the topic.

Kim~\textit{et al.}~\cite{KMMSSUW18} give a lower bound for
$\ldim(1,n-\lceil n/e\rceil;n)= \Omega (n/\log n)$ which implies
that $\ldim(2^{[n]})= \Omega (n/\log n)$. For the height $2$ poset
$P(1,n-\lceil n/e\rceil;n)$ they in fact give bounds on the local
covering numbers of the corresponding bipartite graphs.

A similar lower bound on $\ldim(2^{[n]})$ can be obtained as follows: Let
$k = \ldim(2^{[n]})$ and consider a local realizer $L_1,\ldots,L_s$ such that
each subset of $[n]$ appears in at most~$k$ of the partial linear
extensions. Altogether there are at most $kn$ appearances of singletons and
at most $kn$ partial linear extensions containing a singleton. The singletons cut these partial linear extensions altogether into at most $2kn$ consecutive parts.  Given a non-singleton fixed set $A$ of $[n]$, for any given such part we have two options, $A$ is either present in this part or not. Moreover, $A$ is present in at most $k$ partial linear extensions, thus in at most $k$ such parts. Two sets $A$ cannot be present in exactly the same parts by the definition of a local realizer. Thus, the number of sets $A$ (which is equal to $2^n-n$) is at most the number of subsets of size at most $k$ on $2kn$ elements. Hence $2^n-n \leq \sum_{j=1}^k \binom{2nk}{j} \leq k\binom{2nk}{k}$. From the inequality it follows that $k \geq (1 - o(1))n/\log_2 n$.

\begin{problem}
  Determine the asymptotics of $\ldim(2^{[n]})$.
\end{problem}

A possible approach towards resolving the problem would be to study the
local dimension of appropriate pairs of levels.

We continue with what we can say regarding $\ldim(s,t;n)$
for some specific values of $s$ and $t$.

\subsection{The subposet of the two middle levels}

Let $n=2k+1$ be odd and consider the poset $P(k,k+1;2k+1)$ induced by the two
middle levels of the Boolean lattice. We are interested in
$\ldim(k,k+1;2k+1)$. More specifically, let $A(s,t;n)$ be the adjacency matrix
of the bipartite graph $G_{P(s,t;n)}$ defined by the critical pairs (see
Section~\ref{sec:local-dimension}) of levels $s$ and $t$. We want to find
good local covers of $M_{2k+1}= A(k,k+1;2k+1)$.

First we give a recursive formula for $M_{2k+1}$ (notice that it has
${2k+1} \choose {k}$ rows and colums). $M_1$ is the matrix with a
single $0$ entry (its single row corresponds to $\emptyset$, its
single column to $[1]$ and these are not connected in the
corresponding bipartite graph as $\emptyset\subset [1]$, that is, they
do not form a critical pair. Then,

$$M_{2k+1}=
\begin{bmatrix}
    M_{2k-1} &  \textbf{J} &    \overline{I} & \textbf{J}\\
  \textbf{J} &    M_{2k-1} &    \overline{I} & \textbf{J}\\
  \textbf{J} &  \textbf{J} & A(k-2,k-1;2k-1) & \textbf{J}\\
\overline{I} &\overline{I} &      \textbf{J} & A(k,k+1;2k-1)
\end{bmatrix}.
$$
Here \textbf{J} denotes an all-$1$ (not necessarily square) matrix of
appropriate size and $\overline{I}$ is the complement of an identity
(square) matrix of appropriate size. This recursion can be easily
verified by considering two elements $x,y\in [2k+1]$ and
ordering the rows and columns by first taking the sets
containing~$x$ but not containing $y$, then taking the sets
containing $y$ but not containing~$x$, then the sets containing
$x$ and $y$ and finally the sets containing none of the two.

\begin{problem}
  Determine the best local cover of $M_{2k+1}$ by Young diagrams.
\end{problem}

\subsection{The subposet of the first two levels}

In this subsection, we look at the poset $P(1,2;n)$ and the graph
$G_{P(1,2;n)}$ of critical pairs in $P(1,2;n)$. Throughout this section, we
identify the first layer with $[n]$ in the obvious way.

It is known that the dimension of $P(1,2;n)$ grows asymptotically as
$\log_2\log_2 n + (\frac{1}{2}+o(1))\log_2\log_2\log_2 n$. Spencer proved the
upper bound in~\cite{spencer}, and F\"{u}redi, Hajnal, R\"{o}dl, and Trotter
proved the corresponding lower bound in~\cite{fhrt}. The maximum $n$ such that
$\dim(1,2;n) \leq k$ is sometimes denoted $HM(k)$, see OEIS\footnote{On-Line
  Encyclopedia of Integer Sequences; {\tt https://oeis.org}} Sequence A001206
and Ho\c{s}ten and Morris~\cite{HS99}.

\begin{theorem} As $n\rightarrow\infty$, $\ldim(P(1,2;n)) =
\log_2\log_2 n + O(\log_2\log_2\log_2 n)$.
\end{theorem}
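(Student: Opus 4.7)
The upper bound is immediate. Since $\ldim(\mathcal{P}) \leq \dim(\mathcal{P})$ for every poset, and Spencer~\cite{spencer} showed that $\dim(P(1,2;n)) \leq \log_2\log_2 n + (\tfrac{1}{2}+o(1))\log_2\log_2\log_2 n$, we obtain $\ldim(P(1,2;n)) \leq \log_2\log_2 n + O(\log_2\log_2\log_2 n)$ with no work.

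For the matching lower bound, the plan is to adapt the classical F\"uredi--Hajnal--R\"odl--Trotter argument~\cite{fhrt} to the local setting. Let $\mathcal{L} = \{L_1,\ldots,L_s\}$ be a local realizer of $P(1,2;n)$ in which each element appears in at most $k$ partial linear extensions; the goal is to show $k \geq \log_2\log_2 n - O(\log_2\log_2\log_2 n)$. For each singleton $\{i\}$ and each $L \in \mathcal{L}$ with $\{i\} \in L$, I would define the \emph{shadow}
\[
B_L(i) = \{ j \in [n]\setminus\{i\} : \text{some occurrence of } j \text{ in } L \text{ lies below } \{i\} \text{ in } L \},
\]
where an occurrence of $j$ is either the singleton $\{j\}$ or a pair $\{j,\ast\}$; the forced relations $\{j\} < \{j,\ast\}$ make this well-defined. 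The key reformulation is that reversing the critical pair $(\{i\},\{j,k\})$ in $L$ is equivalent to $\{j,k\} \subseteq B_L(i)$. Hence for every $i$ the at-most-$k$ shadows $\{B_L(i)\}_{L \ni \{i\}}$ must \emph{pair-cover} $[n]\setminus\{i\}$: every pair in $[n]\setminus\{i\}$ is contained in at least one of them.

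Having reduced the problem to a system of local pair-covers, the next step is a counting argument in the spirit of FHRT. I would encode each singleton $i$ by the data of its $\leq k$ extensions through $\{i\}$ together with rank information inside each, and show that the pair-cover property forces a scrambling-like condition on these encodings. Mirroring the classical bound, I expect this to yield $n \leq 2^{2^{O(k)}}$, hence $k \geq \log_2\log_2 n - O(\log_2\log_2\log_2 n)$.

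The main obstacle is that partial linear extensions, unlike the full permutations of the classical proof, order only a subset of $[n]$, and different singletons see different subcollections of extensions. I expect to handle this by showing that at a cost absorbed in the $O(\log_2\log_2\log_2 n)$ error term one may augment or merge the extensions through each singleton so that the shadows $B_L(i)$ behave like prefixes of a fixed family of at most $k$ permutations of $[n]$, after which the classical FHRT counting applies almost verbatim.
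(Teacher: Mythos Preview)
Your upper bound matches the paper's. For the lower bound there is a genuine gap: you correctly reduce to the pair-cover property of the shadows $B_L(i)$ (though your claimed ``equivalence'' is only the implication you actually need --- $\{j,k\}$ below $\{i\}$ in $L$ gives $\{j,k\}\subseteq B_L(i)$, not conversely), but your final paragraph merely names the obstacle without resolving it. The FHRT counting works because all $n$ singletons are measured against the \emph{same} $d$ linear orders, so each singleton is encoded by a $d$-tuple of ranks in a common coordinate system and one can then count patterns. In the local setting different singletons live in entirely different collections of partial extensions, and there is no common coordinate system. Your hope to ``augment or merge the extensions \ldots\ so that the shadows behave like prefixes of a fixed family of at most $k$ permutations'' is exactly the missing idea; you have not supplied such a procedure, and there is no obvious one --- the total number of partial extensions in a $k$-local realizer can be unbounded even for fixed $k$, so merging them into $k$ global permutations while preserving the shadow structure is at least as hard as the theorem itself.

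The paper avoids this difficulty by a different argument. It passes to the local difference-graph cover number of $G_{P(1,2;n)}$ and then runs an iterated Erd\H{o}s--Szekeres procedure on a cover $\mathcal D$: starting from $L_0=[n]$, repeatedly pick some $D_{i+1}\in\mathcal D$ meeting $L_i$ in at least $|L_i|^c$ singletons (for a parameter $c<1$) and use Erd\H{o}s--Szekeres to extract $L_{i+1}\subseteq L_i$ of size at least $|L_i|^{c/2}$ on which the orders induced by $D_1,\ldots,D_{i+1}$ are pairwise monotone. Either this runs for roughly $\log_2\log_2 n$ rounds, so every element of the surviving $L_i$ lies in that many cover graphs, or it stalls with $|L_i|$ still large, in which case the first and last elements $a,b$ of $L_i$ (in the $D_i$-order) form a pair whose edges into $L_i$ cannot be covered by any $D_1,\ldots,D_i$ and hence require roughly $|L_i|^{1-c}$ further graphs of $\mathcal D$. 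Choosing $c=1-\tfrac{\log_2\log_2\log_2 n}{\log_2\log_2 n}$ balances the two cases at $\log_2\log_2 n - O(\log_2\log_2\log_2 n)$. The key point is that this argument never needs all singletons to share a coordinate system; it manufactures one on a shrinking subset instead.
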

\begin{proof} The upper bound follows from Spencer's upper bound for
$\dim(P(1,2;n))$. We prove the lower bound
\begin{align*}
\ldim(P(1,2;n)) \geq
\cn{\ell}{\Differences}{G_{P(1,2;n)}} \geq \\
\log_2\log_2 n - \big(1 + \tfrac{1}{\ln 2}\big)\log_2\log_2\log_2 n - o(1).
\end{align*}

Let $\mathcal{D}$ be an $\Differences$-covering of
$G=G_{P(1,2;n)}$. Recall that, for each $D\in\mathcal{D}$, the
singletons in $D$ are weakly ordered by reverse inclusion of their
neighbourhoods. We define a sequence of difference graphs
$D_i\in\mathcal{D}$ and a sequence of subsets $L_i\subseteq[n]$ as
follows. Let $c<1$ be a fixed positive real number. First, choose
$D_1\in\mathcal{D}$ such that $D_1$ contains at least $n^c$
singeletons, if there is such a graph in $\mathcal{D}$. If there
isn't, then each pair is contained in at least $\frac{n-2}{n^c} \gg
\log_2\log_2 n$ elements of $\mathcal{D}$. Otherwise, let $L_1$ be the
set of singletons in $D_1$. Now suppose $L_i$ and $D_1,D_2,\dots,D_i$
have already been chosen. We choose a graph $D_{i+1}\in\mathcal{D}$
such that $V(D_{i+1})\cap L_i \geq |L_i|^c$, if such a graph
exists. If so, then, by the Erd\H{o}s-Szekeres theorem, there is a
subset $L_{i+1}\subseteq V(D_{i+1})\cap L_i$ such that $|L_{i+1}|\geq
|L_i|^{c/2}$ and the elements of $L_{i+1}$ appear in the same or
opposite order in $D_i$ and $D_{i+1}$. Continue in this way until
either $|L_i| \leq \log_2 n$ or $|L_i| > \log_2 n$ and there is no graph in $\mathcal{D}$
that contains $|L_i|^c$ elements of $L_i$. In the former case, each element
of $L_i$ appears in at least $i$ elements of $\mathcal{D}$, and
$n^{(c/2)^i}\leq\log_2 n$, so $i\geq \frac{1}{1-\log_2
c}(\log_2\log_2 n-\log_2\log_2\log_2 n)$. In the latter case, let $a$ and $b$
be the first and last elements of $L_i$ in the order induced by $D_i$
and look at the set of chosen difference graphs $D_j$ that contain the
pair $ab$. Because the ordering on $L_i$ induced by $D_j$ begins with
either $a$ or $b$ for every $j$, none of these graphs can contain any
edges from $ab$ to $L_i$. Every other difference graph in
$\mathcal{D}$ contains less than $|L_i|^c$ edges from $L_i$ to $ab$,
so there must be at least $\frac{|L_i|-2}{|L_i|^c} \geq (\log_2
n)^{1-c} - 2(\log_2 n)^{-c}$ such difference graphs containing
$ab$. Now, if we take $c = 1 - \frac{\log_2\log_2\log_2
n}{\log_2\log_2 n}$, then
\begin{align*}
(\log_2 n)^{1-c}-2(\log_2 n)^{-c} = \\
(\log_2 n)^{\frac{\log_2\log_2\log_2 n}{\log_2\log_2 n}} - 2(\log_2 n)^{-1+o(1)} = \\
2^{\cancel{\log_2\log_2 n} \cdot \frac{\log_2\log_2\log_2 n}{\cancel{\log_2\log_2 n}}} - o(1) = \\
\log_2\log_2 n - o(1).
\end{align*}
Using the affine approximation
\begin{align*}
\tfrac{1}{1-\log_2 c} = 1 + \tfrac{1}{\ln 2}(c-1) + O\left((c-1)^2\right)
\end{align*}
 as $c\to 1$, we have
\begin{align*}
\tfrac{1}{1-\log_2 c}(\log_2\log_2 n - \log_2\log_2\log_2 n) = \\
\left(1 - \tfrac{1}{\ln 2}\cdot \tfrac{\log_2\log_2\log_2 n}{\log_2\log_2 n} +
O\left((\tfrac{\log_2\log_2\log_2 n}{\log_2\log_2 n})^2\right)\right)(\log_2\log_2 n - \log_2\log_2\log_2 n) = \\
\log_2\log_2 n - \left(1 + \tfrac{1}{\ln 2}\right)\log_2\log_2\log_2 n - o(1)
.
\end{align*}
Therefore,
\begin{align*}
\cn{\ell}{\Differences}{G_{P(1,2;n)}} \geq
\min\big\{\log_2\log_2 n - o(1), \\ \log_2\log_2 n - \left(1 + \tfrac{1}{\ln 2}\right)\log_2\log_2\log_2 n - o(1)\big\},
\end{align*}
and the stated lower bound follows immediately.
\end{proof}

\begin{theorem} As $n\rightarrow\infty$,
$\cn{\ell}{\CBipartites}{G_{P(1,2;n)}} = \Theta(\log n)$.
\end{theorem}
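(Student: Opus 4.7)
The plan is to prove the two directions separately: the upper bound via a random construction of $\Theta(\log n)$ complete bipartite subgraphs of $G_{P(1,2;n)}$, and the lower bound by embedding $K^\star_{\lfloor n/2\rfloor,\lfloor n/2\rfloor}$ as an induced subgraph and invoking Proposition~\ref{prop:kstar}.

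For the upper bound, I would take independent random subsets $S_1,\ldots,S_m$ of $[n]$ with $m = \lceil 3\log_2 n / \log_2(8/7)\rceil$, each element placed in each $S_\ell$ independently with probability $1/2$. For each $\ell \in [m]$ let $K_\ell$ denote the complete bipartite graph between the singletons $\{i : i \notin S_\ell\}$ on the first layer and the pairs $\{\{j,k\} : \{j,k\} \subseteq S_\ell\}$ on the second layer. Every $K_\ell$ is a subgraph of $G_{P(1,2;n)}$: if $i \notin S_\ell$ and $\{j,k\} \subseteq S_\ell$ then $i \neq j,k$, so the singleton $i$ and the pair $\{j,k\}$ form a critical pair. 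A given edge $(i,\{j,k\})$ of $G_{P(1,2;n)}$ is covered by a fixed $K_\ell$ with probability exactly $1/8$, so a union bound over the fewer than $n^3$ edges shows that with positive probability all edges are covered by $K_1,\ldots,K_m$. Since each vertex is trivially contained in at most $m = O(\log n)$ of the $K_\ell$, this yields an $O(\log n)$-local $\CBipartites$-cover of $G_{P(1,2;n)}$.

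For the lower bound, set $m = \lfloor n/2\rfloor$ and consider the singletons $q_i = \{2i-1\}$ together with the pairs $p_i = \{2i-1,2i\}$ for $i \in [m]$. Then $q_i \subset p_i$ in $P(1,2;n)$, so $q_i$ and $p_i$ are not a critical pair, whereas for $j \neq i$ we have $q_j \cap p_i = \emptyset$ (as $2j-1$ is odd and distinct from both $2i-1$ and $2i$), so $q_j$ and $p_i$ are incomparable and hence adjacent in $G_{P(1,2;n)}$. Thus the induced subgraph of $G_{P(1,2;n)}$ on $\{q_1,\ldots,q_m\}\cup\{p_1,\ldots,p_m\}$ is exactly $K^\star_{m,m}$. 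Because the intersection of any complete bipartite graph with a vertex subset is again complete bipartite, any $k$-local $\CBipartites$-cover of $G_{P(1,2;n)}$ restricts to a $k$-local $\CBipartites$-cover of this induced $K^\star_{m,m}$, and Proposition~\ref{prop:kstar} then gives
\[
\cn{\ell}{\CBipartites}{G_{P(1,2;n)}} \geq \cn{\ell}{\CBipartites}{K^\star_{m,m}} = \Omega(\log n).
\]

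The only delicate point is checking that the random family of $\Theta(\log n)$ complete bipartite graphs truly covers every edge; since each edge enjoys a constant covering probability $1/8$, the union bound against the $n^3$ edges goes through comfortably. Everything else is routine verification, and the matching lower bound is essentially immediate once the induced copy of $K^\star_{m,m}$ is spotted.
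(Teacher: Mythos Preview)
Your proof is correct and follows essentially the same approach as the paper: the upper bound is the same random-bipartition argument, and the lower bound likewise finds an induced copy of $K^\star_{m,m}$ and invokes Proposition~\ref{prop:kstar}. The only difference is the choice of embedding---the paper fixes a single element $x$ and takes all other singletons together with all pairs containing $x$, obtaining $K^\star_{n-1,n-1}$ rather than your $K^\star_{\lfloor n/2\rfloor,\lfloor n/2\rfloor}$, which yields a marginally better constant but the same $\Omega(\log n)$.
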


\begin{proof} First we prove the lower bound. Choose any $x\in[n]$ and
consider the subgraph $F$ of $G_{P(1,2;n)}$ induced by the set of
singletons other than $x$ and the set of pairs containing $x$. $F$ is
a complete bipartite graph minus a matching, and the homomorphism
$\varphi : F\rightarrow K_{n-1}$ defined by $\varphi(y) = \varphi(xy)
= y$ is a double covering map. Hence if $B$ is an $\CBipartites$-covering of $G_{P(1,2;n)}$ and $B'$ is its restriction
to $F$, then $B'$ is an $\CBipartites$-covering of
$K^\star_{n-1,n-1}$. Therefore, by \cref{prop:hans} and \cref{prop:kstar},
$\cn{\ell}{\CBipartites}{G_{P(1,2;n)}} \geq \frac{1}{2}\log_2(n-1)$.

Now we prove the upper bound. Choose a random partition $A\cupdot B$
of $[n]$ and consider the complete bipartite subgraph of
$G_{P(1,2;n)}$ induced by the set of singletons in $A$ and the set of
pairs of elements of $B$. The edge $\{a,bc\}$ is covered by this
subgraph if and only if $a\in A$, $b,c\in B$, so the probability that
the edge is not covered is $\frac{7}{8}$. If we choose $3\log_{8/7} n$
such partitions independently, then the expected number of edges not
covered is $3\binom{n}{3}\big(\frac{7}{8}\big)^{3\log_{8/7} n} <
n^3\cdot n^{-3} = 1$. Therefore,
$\cn{\ell}{\CBipartites}{G_{P(1,2;n)}} \leq 3\log_{8/7} n$.
\end{proof}

\section{Ferrers Dimension}
\label{sec:ferrers}
\def\PP{\mathcal{P}}

Covering numbers and local covering numbers can also be defined for
directed graphs. In this section we provide some links to
research in this direction with emphasis to
questions regarding notions of dimension.

Recall that Young diagrams are more commonly called \emph{Ferrers diagrams}.
Riguet~\cite{Ri51} defined a \emph{Ferrers relation}\footnote{
  According to~\cite{EFO08} Ferrers relations have also been studied
  under the names of biorders, Guttman scales, and bi-quasi-series.
} 
as a relation  $R \subset X \times Y$ on possibly overlapping base sets $X$ and $Y$ such that
\begin{quote}
  $(x,y) \in R$ and $(x',y') \in R$ $\quad\implies\quad$
  $(x,y') \in R$ or $(x',y) \in R$.
\end{quote}
A relation $R \subset X \times Y$ can be viewed as a digraph $D$ with
$V_D = X\cup Y$ and $E_D=R$. A digraph thus corresponding to a Ferrers
relation is a \emph{Ferrers digraph}. Riguet characterized Ferrers
digraphs as those in which the sets $N^+(v)$ of out-neighbors are linearly
ordered by inclusion. Hence, bipartite Ferrers digraphs (i.e., when $X\cap Y=\emptyset$) are exactly
the difference graphs.

By playing with $x=x'$ and/or $y=y'$ in the definition of a Ferrers
relation it can be shown that Ferrers digraphs without loops are
\textbf{2+2}-free and transitive, i.e., they are interval orders.
In general, however, Ferrers digraphs may have loops.

In the spirit of order dimension the \emph{Ferrers dimension of a
  digraph} $D$ ($\fdim(D)$) is the minimum cardinality of a set of Ferrers digraphs
whose intersection is~$D$. If $\PP=(P,\leq)$ is a poset and $D_\PP$ the
digraph associated with the order relation (reflexivity implies that
$D_\PP$ has loops at all vertices), then $\dim(\PP) =
\fdim(D_\PP)$. This was shown by Bouchet~\cite{Bo71} and
Cogis~\cite{Co82}. The result implies that Ferrers dimension is a
generalization of order dimension. Since Ferrers digraphs are
characterized by having a staircase shaped adjacency matrix the
complement of a Ferrers digraph is again a Ferrers digraph.
Therefore, instead of representing a digraph as intersection of
Ferrers digraphs containing it ($D = \bigcap F_i$ with $D\subseteq F_i$),
we can as well represent its complement as union of Ferrers digraphs
contained in it ($\ovl{D} = \bigcup \ovl{F_i}$ with
$\ovl{F_i}\subseteq \ovl{D}$). This simple observation is sometimes
useful and indicates the connection to covering numbers, c.f.,
\cref{sec:covering-nums}.

The \emph{Ferrers dimension of a relation} $R$ ($\fdim(R)$) is
the minimum cardinality of a set of Ferrers relations whose intersection is $R$.
Note that if $D$ is the digraph corresponding to a relation $R$, then
$\fdim(D) = \fdim(R)$. Hence, the
result of Bouchet can be expressed as $\dim(\PP) =
\fdim(P,P,\leq)$, where we use the notation $(P,P,\leq)$ to
emphasize that we interpret the order as a relation.
The interval dimension $\operatorname{idim}(\PP)$ of a poset~$\PP$ is the
minimum cardinality of a set of interval orders extending $\PP$ whose intersection
is~$\PP$.  Interestingly, interval dimension is
also nicely expressed as a special case of Ferrers dimension:
$\operatorname{idim}(\PP) = \fdim(P,P,<)$.  For this and far reaching
generalizations see Mitas~\cite{Mi95}.

Relations $R \subset X \times Y$ with $X \cap Y=\emptyset$ can be
viewed as bipartite graphs. In this setting
$\fdim(R)$ is the global $\Differences$-covering number of
$\ovl{R}$, i.e., the minimum cardinality of a set of difference graphs whose union is
the bipartite complement of $R$.

We believe that it is worthwhile to study local variants of Ferrers
dimension.

\section*{Acknowledgments}

This paper has been assembled from drafts of three groups of authors
who had independently obtained Theorem~\ref{thm:main-simple}. The
research of Felsner and Ueckerdt has partly been published
in~\cite{FU19}. Most of their research was conducted during the
Graph Drawing Symposium 2018 in Barcelona. Thanks to Peter
Stumpf for helpful comments and discussions. Dam\'asdi, Keszegh and Nagy thanks the organizers of the 8th Emléktábla Workshop, where they started to work on these problems. They also thank Russ Woodroofe for his comments about the proof of Theorem \ref{thm:main-simple}.

\bibliography{lit}

\begin{thebibliography}{10}

\bibitem{AKL+85}
Alok Aggarwal, Maria~M. Klawe, David Lichtenstein, Nathan Linial, and Avi
  Wigderson.
\newblock Multi-layer grid embeddings.
\newblock In {\em Proc. FOCS}, pages 186--196, 1985.

\bibitem{BCPSTT17}
Fidel Barrera-Cruz, Thomas Prag, Heather~C. Smith, Libby Taylor, and William~T.
  Trotter.
\newblock {Comparing Dushnik-Miller Dimension, Boolean Dimension and Local
  Dimension}.
\newblock {\em arXiv preprint 1710.09467}, 2017.

\bibitem{bollobas2007separating}
B{\'e}la Bollob{\'a}s and Alex Scott.
\newblock On separating systems.
\newblock {\em European Journal of Combinatorics}, 28(4):1068--1071, 2007.

\bibitem{Bo71}
Andr\'e Bouchet.
\newblock {\em Etude combinatoire des ensembles ordonn\'es finis}.
\newblock These de Doctorat D'Etat, Universite de Grenoble, 1971.

\bibitem{Co82}
Olivier Cogis.
\newblock On the {F}errers dimension of a digraph.
\newblock {\em Discrete Math.}, 38:47--52, 1982.

\bibitem{EFO08}
David Eppstein, Jean-Claude Falmagne, and Sergei Ovchinnikov.
\newblock {\em Media theory}.
\newblock Springer, 2008.

\bibitem{EP97}
Paul Erd\H{o}s and L\'aszl\'o Pyber.
\newblock Covering a graph by complete bipartite graphs.
\newblock {\em Discrete Math.}, 170:249--251, 1997.

\bibitem{FU19}
Stefan Felsner and Torsten Ueckerdt.
\newblock A note on covering {Y}oung diagrams with applications to the local
  dimension of posets.
\newblock In {\em Proc. Eurocomb 2019}, volume~88 of {\em Acta Math. Univ.
  Comenianae}, pages 673--678, 2019.

\bibitem{FH96}
Peter~C. Fishburn and Peter~L. Hammer.
\newblock Bipartite dimensions and bipartite degrees of graphs.
\newblock {\em Discrete Math.}, 160:127--148, 1996.

\bibitem{fhrt}
Zoltan F\"{u}redi, P{\'e}ter Hajnal, Vojtech R\"{o}dl, and William~T. Trotter.
\newblock Interval orders and shift graphs.
\newblock In {\em Proc. Sets, graphs and numbers}, volume~60 of {\em Colloq.
  Math. Soc. J\'{a}nos Bolyai}, pages 297--313. North-Holland, 1992.

\bibitem{GW95}
Andr\'{a}s Gy\'{a}rf\'{a}s and Douglas West.
\newblock Multitrack interval graphs.
\newblock In {\em Proc. 26. {S}outheastern ICGTC}, volume 109 of {\em Congr.
  Numer.}, pages 109--116, 1995.
\newblock {\tt www.math.illinois.edu/~dwest/pubs/tracks.ps}.

\bibitem{hansel}
Georges Hansel.
\newblock Nombre minimal de contacts de fermeture n\'ecessaires pour r\'ealiser
  une fonction bool\'eenne sym\'etrique de $n$ variables.
\newblock {\em C. R. Acad. Sci. Paris}, 258, 1964.

\bibitem{HS99}
Serkan Ho\c{s}ten and Walter~D. Morris.
\newblock The order dimension of the complete graph.
\newblock {\em Discrete Math.}, 201:133--139, 1999.

\bibitem{Ki99}
Henry~A. Kierstead.
\newblock The dimension of two levels of the {B}oolean lattice.
\newblock {\em Discrete Math.}, 201:141--155, 1999.

\bibitem{KMMSSUW18}
Jinha Kim, Ryan~R. Martin, Tom{\'a}{\v{s}} Masa{\v{r}}{\'\i}k, Warren Shull,
  Heather~C. Smith, Andrew Uzzell, and Zhiyu Wang.
\newblock On difference graphs and the local dimension of posets.
\newblock {\em arXiv preprint 1803.08641}, 2018.
\newblock To appear {\it Europ. J. Comb.}

\bibitem{KNPV12}
Zolt\'an Kir\'aly, Zolt\'an~L. Nagy, D\"{o}m\"{o}t\"{o}r P\'alv\"{o}lgyi, and
  Mirk\'o Visontai.
\newblock On families of weakly cross-intersecting set-pairs.
\newblock {\em Fundamenta Informaticae}, 117:189--198, 2012.

\bibitem{KU16}
Kolja Knauer and Torsten Ueckerdt.
\newblock Three ways to cover a graph.
\newblock {\em Discrete Math.}, 339(2):745--758, 2016.

\bibitem{Mi95}
Jutta Mitas.
\newblock Interval orders based on arbitrary ordered sets.
\newblock {\em Discrete Math.}, 144:75--95, 1995.

\bibitem{NW64}
Crispin St. J.~A. Nash-Williams.
\newblock Decomposition of finite graphs into forests.
\newblock {\em J. London Math. Soc.}, 39:12, 1964.

\bibitem{Pet91}
Julius Petersen.
\newblock Die {T}heorie der regul\"{a}ren graphs.
\newblock {\em Acta Math.}, 15:193--220, 1891.

\bibitem{Ri51}
Jacques Riguet.
\newblock Les relations de {F}errers.
\newblock {\em {C. R. Acad. Sci., Paris}}, 232:1729--1730, 1951.

\bibitem{spencer}
Joel Spencer.
\newblock Minimal scrambling sets of simple orders.
\newblock {\em Acta Math. Acad. Sci. Hungar.}, 22:349--353, 1971.

\bibitem{U16}
Torsten Ueckerdt.
\newblock Order \& {G}eometry {W}orkshop, 2016.

\end{thebibliography}
\bibliographystyle{plain}

\end{document}